\numberwithin{equation}{section}
\numberwithin{theorem}{section}
\numberwithin{lemma}{section}
\numberwithin{remark}{section}
\newtheorem{ass}{Assumption}
\date{}
\begin{document}

\title{High-accuracy time discretization of stochastic fractional diffusion equation}


\author{Xing Liu
}


\institute{
Xing Liu \at
Bigdata Modeling and Intelligent Computing research institute, Hubei University of Education, Wuhan 430205, People's Republic of China. \email{2718826413@qq.com}
}

\maketitle

\begin{abstract}	
A high-accuracy time discretization is discussed to numerically solve the nonlinear fractional diffusion equation forced by a space-time white noise. The main purpose of this paper is to improve the temporal convergence rate by modifying the semi-implicit Euler scheme. The solution of the equation is only H\"older continuous in time, which is disadvantageous to improve the temporal convergence rate. Firstly, the system is transformed into an equivalent form having better regularity than the original one in time. But the regularity of nonlinear term remains unchanged. Then, combining Lagrange mean value theorem and independent increments of Brownian motion leads to a higher accuracy discretization of nonlinear term which ensures the implementation of the proposed time discretization scheme without loss of convergence rate. Our scheme can improve the convergence rate from ${\min\{\frac{\gamma}{2\alpha},\frac{1}{2}\}}$ to ${\min\{\frac{\gamma}{\alpha},1\}}$ in the sense of mean-squared $L^2$-norm. The theoretical error estimates are confirmed by extensive numerical experiments.
\\
\\
\keywords{high-accuracy time discretization; modifying the semi-implicit Euler scheme; the regularity of nonlinear term; mean-squared $L^2$-norm}

\subclass{26A33 \and 65M60 \and 65L20 \and65C30}
\end{abstract}

\section{Introduction}
The anomalous diffusion phenomena are usually modeled macroscopically by partial differential equations (PDFs), describing the distribution of particle concentration in space, at each moment \cite{Deng2020,Dybiec2009}. In addition to the inherent laws of particle motion, sometimes the external stochastic disturbances can also affect the distribution of particle concentration. Thus, a noisy force term is included in the model problems \cite{Chow2007}. In this paper, we discuss the time discretization of the following model

 \begin{equation}\label{eq:1.1}
\frac{\partial u(x,t)}{\partial t}=-(-\mathrm{\Delta})^\alpha u(x,t)+f\left(u(x,t)\right)+\dot{B}(x,t)
\end{equation}
with the initial and homogeneous Dirichlet boundary condition given by
\begin{eqnarray*}
 u(x,0)=u_0(x),\quad x\in D,\\
 u(x,t)=0,\quad (x,t)\in \partial D\times[0,T],
  \end{eqnarray*}
and $\alpha \in (0,1)$. Here $\dot{B}(x,t)=\frac{\partial B(x,t)}{\partial t}$ is the white noise; ${B}(x,t)$ represents an infinite dimensional Brownian motion. The role of nonlinear term $f$ is to produce or destroy particles. Let $-\mathrm{\Delta}$ be the
infinitesimal generator of killed Brownian motion; then $(-\mathrm{\Delta})^\alpha$ is the infinitesimal generator of the
subordinated killed Brownian motion \cite{Song2003}. It shows that  \cite{Nochetto2015,Song2003}  if $\{(\lambda_i,\phi_i)\}^\infty_{i=1}$ are the eigenpairs of $-\mathrm{\Delta}$, then $\{(\lambda^\alpha_i,\phi_i)\}^\infty_{i=1}$ are the eigenpairs of $(-\mathrm{\Delta})^\alpha$, i.e.,
 \begin{equation}\label{eq:1.03}
 \left\{
\begin{array}{cc}
 -\mathrm{\Delta}\phi_i=\lambda_i\phi_i, \quad &\mathrm{in} \ D,\\
  \quad \phi_i=0,\quad &\mathrm{on} \ \partial D,
 \end{array}
 \right.
 \end{equation}
and
\begin{equation}\label{eq:1.04}
 \left\{
\begin{array}{cc}
(-\mathrm{\Delta})^\alpha\phi_i =\lambda^\alpha_i\phi_i, \quad &\mathrm{in} \ D,\\
  \quad \phi_i=0,\quad &\mathrm{on} \  \partial D.
 \end{array}
 \right.
 \end{equation}

Recently, many efforts have been made in theoretical and numerical studies of the stochastic PDEs. For example, the regularity of the solution for stochastic PDEs was studied \cite{Choi2021,Hu2017,Song2019}.
In \cite{Brehier2014}, using the semi-implicit Euler scheme has studied the time discretization of a parabolic stochastic PDE. Based on an exponential scheme in time and spectral Galerkin methods in space, the posteriori error estimate of the numerical method for the stochastic Allen-Cahn equation was derived \cite{Blomker2019}. A Markov jump process approximation of stochastic PDEs has been introduced \cite{Bou2018}. The authors of \cite{Liu2020} established a numerical scheme whose mean-square convergence order was $1/4$ in time. In \cite{Chen2019}, based on H\"older continuity of the solution, the mean-square convergence order $1/2$ was derived in the temporal direction for stochastic Maxwell equations. In \cite{Cui2019}, the authors used the spectral splitting Crank-Nicolson scheme to obtain the approximation for the solutions of stochastic nonlinear Schr\"odinger equations, and the temporal convergence order $1/2$ in the mean-square sense was obtained. More recently, the authors of \cite{LiuX2020} discussed a fully discrete scheme for the fractional diffusion equation forced by a tempered fractional Gaussian noise. The existing results show that the temporal convergence order typically depends on the H\"older regularity of solution in time. For instance, these authors of \cite{Chen2019,Gunzburger2019,Liu2020} prove that
\begin{equation*}
 \left\|u(t_n)-u_n\right\|_{L^2(D,U)}\le C(t-s)^{\min\{\frac{\gamma}{2\alpha},\frac{1}{2}\}}\left(1+\left\|u_0\right\|_{L^2\left(D,\dot{U}^\gamma\right)}\right),
\end{equation*}
when the solution in mean-square $L^2$-norm is only $\min\{\frac{\gamma}{2\alpha},\frac{1}{2}\}$-H\"older continuous, i.e.,
\begin{equation*}
 \left\|u(t)-u(s)\right\|_{L^2(D,U)}\le C(t-s)^{\min\{\frac{\gamma}{2\alpha},\frac{1}{2}\}}\left(1+\left\|u_0\right\|_{L^2\left(D,\dot{U}^\gamma\right)}\right).
\end{equation*}
Here, $u_n$ denotes the numerical solution of stochastic PDEs forced by a space-time white noise at time $t_n$.

With the above introduction, an interesting question arises as to whether it is possible to design a temporal discretization which strongly converges with a rate faster than $\min\{\frac{\gamma}{2\alpha},\frac{1}{2}\}$ in problem \eqref{eq:1.1}. In this paper, we provide a positive answer to this question by modifying the semi-implicit Euler scheme. We first transform Eq. \eqref{eq:1.1} into an equivalent form having better regularity than the original one in time by using Ornstein-Uhlenbeck process, i.e.,
 \begin{equation*}
\mathrm{d} z(t)+A^\alpha  z(t)\mathrm{d}t=f\left(u(t)\right)\mathrm{d}t,
\end{equation*}
where $u(t)=z(t)+\int^t_0S(t-s)\mathrm{d}B(s)$ and $S(t-s)=\mathrm{e}^{-(t-s)A^\alpha}$. Then using the semi-implicit Euler scheme discretize the above equation:
 \begin{equation*}
 z_{n+1}-z_{n}+(t_{n+1}-t_{n}) A^\alpha  z_{n+1}=(t_{n+1}-t_{n}) f\left(u_{n}\right),
\end{equation*}
where $u_n=z_n+\int^{t_n}_0S(t_n-s)\mathrm{d}B(s)$. In fact, the convergence rate remains unchanged because the accuracy of approximation for nonlinear term $f$ is not improved. Thanks to independent increment for $B(t)$ and Lagrange mean value theorem, we can design a higher accuracy discretization of $f$. The proposed scheme possesses the following convergence rate under the mean-square $L^2$-norm:
\begin{equation*}
 \left\|u(t_n)-u_n\right\|_{L^2(D,U)}\le C(t-s)^{\min\{\frac{\gamma}{\alpha},1\}}\left(1+\left\|u_0\right\|_{L^2\left(D,\dot{U}^\gamma\right)}\right).
\end{equation*}

This paper is organized as follows. In the next section, we introduce some preliminaries, including definitions, assumptions and properties of Brownian motion. In Sect. \ref{sec:3}, we prove the H\"older regularity of the mild solution $z(t)$ for the equivalent form \eqref{eq:3.2} in mean-squared $L^2$-norm.  In Sect. \ref{sec:4}, we modify the semi-implicit Euler method to obtain a high order time discretization of \eqref{eq:1.1}; and the convergence rate estimate for the proposed discrete scheme is derived. The numerical experiments are performed in Sect. \ref{sec:5}. We end the paper with some discussions in Sect. \ref{sec:6}.

\section{Notations and preliminaries} \label{sec:2}
In this section, we give some notations, upper bound estimate of eigenvalues for $-\mathrm{\Delta}$, and definitions of function space, which are commonly used in the paper to derive the strong convergence rate of time discretization.

We take $U=L^2(D;\mathbb{R})$ to denote the space of real-valued 2-times integrable functions with inner product $\langle\cdot,\cdot\rangle$ and norm $\|\cdot\|=\langle\cdot,\cdot\rangle^{\frac{1}{2}}$. We define the unbounded linear operator $A^\nu$ by $A^\nu u=\left(-\mathrm{\Delta}\right)^\nu u$ on the domain
\begin{equation*}
\mathrm{dom}\left(A^{\nu}\right):=\left\{ A^\nu u\in U:u(x)=0,\ x\in \partial D \right\},
\end{equation*}
where $\nu\ge0$. Let $\dot{U}^\nu\subset U$ denote the Banach space equipped with the inner product
\begin{equation*}
\left\langle u, v\right\rangle_{\nu}:=\sum^\infty_{i=1}\lambda_i^{\frac{\nu}{2}}\left\langle u,\phi_i(x)\right\rangle\times\lambda_i^{\frac{\nu}{2}}\left\langle v,\phi_i(x)\right\rangle
\end{equation*}
and norm
\begin{equation*}
\left\|u\right\|_{\nu}:=\left(\sum^\infty_{i=1}\lambda_i^{\nu}\left\langle u,\phi_i(x)\right\rangle^2\right)^{\frac{1}{2}},
\end{equation*}
where $\{(\lambda_i,\phi_i)\}^\infty_{i=1}$ are the eigenpairs of $-\mathrm{\Delta}$ with homogeneous Dirichlet boundary condition.

\begin{lemma}\label{le:Sec2}
{\rm(\cite{Laptev1997,Li1983,Strauss2008})} Let $\Omega\subset \mathbb{R}^d$ denote a bounded domain, $d\in\{1,2,3\}$, $|\Omega|$  be the volume of $\Omega$, and $\lambda_{i}$ the i-th eigenvalue of the Dirichlet homogeneous boundary problem for the Laplacian operator $-\mathrm{\Delta}$ in $\Omega$.
Then
\begin{equation*}
\lambda_{i}\ge \frac{4d\pi^2 }{d+2}i^{\frac{2}{d}}\cdot|\Omega|^{-\frac{2}{d}}\cdot B^{-\frac{2}{d}}_d,
\end{equation*}
where $i\in\mathbb{N}$, and $B_d$ is the volume of the unit $d$-dimensional ball.
\end{lemma}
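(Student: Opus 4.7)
The plan is to obtain the pointwise lower bound on $\lambda_i$ as a consequence of a classical Li--Yau-type bound on the \emph{sum} of the first $i$ Dirichlet eigenvalues, together with the monotonicity of the spectrum. The key ingredient is the Li--Yau inequality, stated in the cited references \cite{Li1983,Laptev1997}, which asserts that for the Dirichlet Laplacian on a bounded domain $\Omega\subset\mathbb{R}^d$,
\begin{equation*}
\sum_{j=1}^{i}\lambda_{j}\;\ge\;\frac{d}{d+2}\cdot\frac{4\pi^{2}\,i^{1+2/d}}{(B_d\,|\Omega|)^{2/d}}.
\end{equation*}
This inequality is sharp up to the Weyl-leading order; it is usually proved by extending the first $i$ orthonormal eigenfunctions by zero to $\mathbb{R}^d$, applying Plancherel's identity to write $\sum_{j}\lambda_j$ as an integral of $|\xi|^2 \sum_j |\widehat u_j(\xi)|^2$, and then minimizing this integral under the constraint $\sum_j|\widehat u_j(\xi)|^2\le(2\pi)^{-d}|\Omega|$ and $\int\sum_j|\widehat u_j(\xi)|^2\,d\xi = i$ via a symmetric-decreasing rearrangement of $\sum_j|\widehat u_j(\xi)|^2$ onto a centered ball.

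Once that sum bound is in hand, I would extract the statement of the lemma in one line. Since the eigenvalues of $-\Delta$ under homogeneous Dirichlet conditions are arranged in nondecreasing order, we have $\lambda_j\le\lambda_i$ for all $1\le j\le i$, hence
\begin{equation*}
i\,\lambda_i\;\ge\;\sum_{j=1}^{i}\lambda_j\;\ge\;\frac{d}{d+2}\cdot\frac{4\pi^{2}\,i^{1+2/d}}{(B_d\,|\Omega|)^{2/d}}.
\end{equation*}
Dividing through by $i$ and regrouping gives $\lambda_i\ge\frac{4d\pi^{2}}{d+2}\,i^{2/d}\,|\Omega|^{-2/d}\,B_d^{-2/d}$, which is the inequality stated in the lemma. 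The restriction $d\in\{1,2,3\}$ plays no role in the proof itself; it reflects only the dimensional setting of interest in the paper.

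The main obstacle, conceptually, is entirely in step one: the Li--Yau sum bound is the substantive content, and its proof requires the Plancherel/rearrangement argument sketched above rather than any elementary manipulation. Since the result is standard and is attributed to the cited sources, I would quote it without reproducing the derivation, and then simply use the monotonicity of $(\lambda_j)$ to reduce the sum estimate to the per-eigenvalue estimate. No additional machinery beyond the ordering of the spectrum is needed for the second step.
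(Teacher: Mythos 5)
Your derivation is correct and matches what the paper does: the paper states this lemma purely by citation to Li--Yau and Laptev, and the standard route from those references is exactly the one you give, namely the Li--Yau bound $\sum_{j=1}^{i}\lambda_j\ge\frac{d}{d+2}\,4\pi^2 i^{1+2/d}(B_d|\Omega|)^{-2/d}$ combined with $i\lambda_i\ge\sum_{j=1}^{i}\lambda_j$. The arithmetic in your final step reproduces the stated constant exactly, so nothing further is needed.
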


\begin{ass}\label{as:2.1}
The function $f:U\to U$ satisfies

\begin{equation*}
f'(u)<\infty, \quad \| f'(u)- f'(v)\|\lesssim \|u-v\|, \quad u,v\in U,
\end{equation*}
and
\begin{equation*}
\|A^{\frac{\nu}{2}} f(u)\|\lesssim 1+\|A^{\frac{\nu}{2}} u\|, \quad u\in \dot{U}^\nu~{\rm with }~\nu\in[0,\gamma],
\end{equation*}
where $\gamma$ is given in Lemma \ref{lem:2}.
\end{ass}

Let $\beta(t)$ be the one-dimensional Brownian motion, and it has independent increments, i.e., the random variables $\beta(t_4)-\beta(t_3)$ and $\beta(t_2)-\beta(t_1)$ are independent whenever $0\le t_1\le t_2\le t_3\le t_4$ \cite{Morters2010}.

The infinite dimensional Brownian motion be represented as
\begin{equation*}
B(x,t)=\sum^\infty_{i=1}\sigma_{i}\beta^{i}(t)\phi_{i}(x),
\end{equation*}
where $|\sigma_{i}|\le \lambda_i^{-\rho}(\rho\ge0,\,\lambda_i$ is given in Lemma \ref{le:Sec2}$)$,  $\{\beta^{i}(t)\}_{i\in\mathbb{N}}$ are mutually independent real-valued one-dimensional Brownian motions, and $\left\{\phi_{i}(x)\right\}_{i\in\mathbb{N}}$ is an orthonormal basis of $U$. The properties of Brownian motion and definition of $B(t)$ imply $B(t)$ has ndependent increments.

We use $L^p(D,U)$ to denote the Banach space consisting of integrable random variables, that is
\begin{equation*}
\left\|u\right\|_{L^p\left(D,U\right)}:=\left(\mathrm{E}\left[\int_D\left|u(x)\right|^p\mathrm{d}x\right]\right)^{\frac{1}{p}}<\infty, \quad \ p\ge1.
\end{equation*}
Next, we define the space $L^p(D,\dot{U}^\nu)$ with norm
\begin{equation*}
\left\|u\right\|_{L^p\left(D,\dot{U}^\nu\right)}:=\left\|A^{\frac{\nu}{2}}u\right\|_{L^p\left(D,U\right)},
\quad \nu\ge0.
\end{equation*}

\section{Regularity of the solution } \label{sec:3}
In this section, we recall the regularity of mild solution of Eq. \eqref{eq:1.1}. Moreover, the H\"older continuity of the mild solution for Eq. \eqref{eq:3.2} is discussed in time. These results will be used for numerical analysis.

For the sake of brevity, we rewrite Eq. \eqref{eq:1.1} as
\begin{equation}\label{eq:3.1}
\left\{
\begin{array}{ll}
\mathrm{d} u(t)+A^\alpha  u(t)\mathrm{d}t=f\left(u(t)\right)\mathrm{d}t+\mathrm{d}B_{H,\mu}(t),& \quad \mathrm{in} \ D\times(0,T],\\
u(0)=u_0,& \quad  \mathrm{in}\ D,\\
u(t)=0,& \quad \mathrm{on}\ \partial D,
\end{array}
\right.
\end{equation}
where $u(t)=u(x,t)$ and $B(t)=B(x,t)$. There is a formal mild solution $u(t)$ for Eq.\ \eqref{eq:3.1}, that is
\begin{equation}\label{eq:3.1-1}
u(t)=S(t)u_0+\int^t_0S(t-s)f\left(u(s)\right)\mathrm{d}s+\int^t_0S(t-s)\mathrm{d}B(s),
\end{equation}
where $S(t)=\mathrm{e}^{-tA^\alpha}$.

We first consider the regularity estimate of Ornstein-Uhlenbeck process $\int^t_0S(t-s)\mathrm{d}B(s)$. Using Lemma\ref{le:Sec2} and Burkh\"older-Davis-Gundy inequality \cite{Neerven2007,Prato2014} leads to
\begin{eqnarray*}\label{eq:3.0-1}
&&\mathrm{E}\left[\left\|\int^t_0A^{\frac{\gamma}{2}}\mathrm{e}^{-(t-s)A^\alpha}\mathrm{d}B(s)\right\|^p\right]\\
&&\le C_p\left(\int^t_0\sum^\infty_{i=1}\left|\lambda_i^{\frac{\gamma-2\rho}{2}}\mathrm{e}^{-(t-s)\lambda_i^\alpha}\right|^2\mathrm{d}s\right)^{\frac{p}{2}}\nonumber\\
&&\le C_p\left(\sum^\infty_{i=1}i^{\frac{2(\gamma-\alpha-2\rho)}{d}}\right)^{\frac{p}{2}}\nonumber.
 \end{eqnarray*}

Next, we use the above inequality to derive the regularity of the solutions $u(t)$ for Eq. \eqref{eq:3.1}.
\begin{lemma}\label{lem:2}
Suppose that Assumptions \ref{as:2.1} are satisfied, $\left\|u(0)\right\|_{L^p\left(D,\dot{U}^\gamma\right)}<\infty$, $0<\epsilon<\frac{1}{2}$, $\gamma=2\rho+\alpha-\frac{d+\epsilon}{2}$, $\rho\ge0$ and $\gamma>0$. Then Eq.\ \eqref{eq:3.1} possesses a unique mild solution 
\begin{equation*}
\left\|u(t)\right\|_{L^p(D,\dot{U}^\gamma)}\lesssim \frac{1}{\sqrt{\epsilon}}+\left\|u_0\right\|_{L^p\left(D,\dot{U}^\gamma\right)}
\end{equation*}
and
\begin{equation*}
\left\|u(t)-u(s)\right\|_{L^p(D,U)}\lesssim (t-s)^{\min\{\frac{\gamma}{2\alpha},\frac{1}{2}\}}\left(\frac{1}{\sqrt{\epsilon}}+\left\|u_0\right\|_{L^p\left(D,\dot{U}^\gamma\right)}\right).
\end{equation*}
\end{lemma}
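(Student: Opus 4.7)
The plan is to work from the mild formulation \eqref{eq:3.1-1} using three ingredients: the spectral semigroup bounds $\|A^{\sigma}S(t)\|_{U\to U}\lesssim t^{-\sigma/\alpha}$ and $\|A^{-\sigma}(I-S(t))\|_{U\to U}\lesssim t^{\sigma/\alpha}$ for $\sigma\ge 0$, both immediate from the diagonalization on $\{\phi_i\}$; Assumption \ref{as:2.1} on $f$; and the Burkh\"older--Davis--Gundy (BDG) computation already displayed just before the lemma. Existence and uniqueness will follow from a standard Picard iteration in $C([0,T];L^p(D,\dot{U}^\gamma))$ once the a priori bound in the first half of the lemma is in hand.

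For the $L^p(D,\dot{U}^\gamma)$ bound I apply $A^{\gamma/2}$ to \eqref{eq:3.1-1}. Since $A^{\gamma/2}$ commutes with $S(t)$, the initial contribution is $\|S(t)u_0\|_{L^p(D,\dot U^\gamma)}\le \|u_0\|_{L^p(D,\dot U^\gamma)}$ (contraction); the deterministic convolution, using the second half of Assumption \ref{as:2.1} together with Minkowski's integral inequality, is dominated by $\int_0^t(1+\|u(r)\|_{L^p(D,\dot U^\gamma)})\,\mathrm{d}r$ and is absorbed by Gronwall; and the stochastic term is precisely the display preceding the lemma, which with $\gamma=2\rho+\alpha-(d+\epsilon)/2$ gives the exponent $2(\gamma-\alpha-2\rho)/d=-1-\epsilon/d$ and hence $\sum_i i^{-1-\epsilon/d}\lesssim 1/\epsilon$, producing the $1/\sqrt{\epsilon}$ factor after the $p$-th root.

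For the H\"older bound I apply $S(t-s)$ to the mild formula at time $s$ and subtract from $u(t)$ to obtain the clean three-term split
\begin{equation*}
u(t)-u(s)=[S(t-s)-I]\,u(s)+\int_s^t S(t-r)f(u(r))\,\mathrm{d}r+\int_s^t S(t-r)\,\mathrm{d}B(r).
\end{equation*}
The first term is controlled by $\|A^{-\gamma/2}(I-S(t-s))\|\lesssim(t-s)^{\gamma/(2\alpha)}$ combined with the $\dot U^\gamma$ bound from the previous step; the deterministic integral is $\lesssim (t-s)$ via the $\nu=0$ case of Assumption \ref{as:2.1}; and the noise increment is handled by BDG, reducing it to
\begin{equation*}
\int_s^t\sum_{i=1}^{\infty}\lambda_i^{-2\rho}\mathrm{e}^{-2(t-r)\lambda_i^\alpha}\,\mathrm{d}r.
\end{equation*}
Splitting the $i$-sum at the cutoff $i_\ast\sim(t-s)^{-d/(2\alpha)}$, the low-frequency head contributes $(t-s)$ times a partial sum via $1-\mathrm{e}^{-x}\le x$, while the high-frequency tail uses $\int_0^\infty\mathrm{e}^{-2\tau\lambda_i^\alpha}\mathrm{d}\tau=(2\lambda_i^\alpha)^{-1}$; balancing the two reproduces the convergent series $\sum_i i^{-1-\epsilon/d}\lesssim 1/\epsilon$ weighted by $(t-s)^{\min\{\gamma/\alpha,1\}}$, and taking the square root delivers the rate $(t-s)^{\min\{\gamma/(2\alpha),1/2\}}$ with the $1/\sqrt{\epsilon}$ prefactor intact.

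The main obstacle is this stochastic H\"older estimate: the frequency cutoff $i_\ast$ must be chosen so that the low- and high-frequency pieces collapse onto the same summable series that drove the stationary bound, since a suboptimal choice would lose either the sharp exponent or the uniform $\epsilon$-dependence. Once that is settled, collecting the three contributions of the split completes the argument.
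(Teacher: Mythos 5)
Your proposal is correct and follows essentially the same route as the paper's proof: the triangle-inequality decomposition of the mild solution into initial, drift, and stochastic-convolution parts, the Burkh\"older--Davis--Gundy computation with $\gamma=2\rho+\alpha-\frac{d+\epsilon}{2}$ yielding the summable series $\sum_i i^{-1-\epsilon/d}\lesssim 1/\epsilon$, and Gr\"onwall for the $\dot{U}^\gamma$ bound, followed by the identical three-term split $[S(t-s)-I]u(s)+\int_s^t S(t-r)f(u(r))\,\mathrm{d}r+\int_s^t S(t-r)\,\mathrm{d}B(r)$ for the H\"older estimate. You merely supply more detail than the paper (the frequency cutoff for the noise increment and the Picard-iteration remark for well-posedness), which the paper leaves implicit.
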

\begin{proof}
Combining the triangle inequality, Eq. \eqref{eq:3.1-1} and the assumption of $f$, we have
\begin{equation*}
\begin{split}
\left\|u(t)\right\|_{L^p\left(D,\dot{U}^\gamma\right)}\lesssim& \left\|S(t)u_0\right\|_{L^p\left(D,\dot{U}^\gamma\right)}+\left\|\int^t_0S(t-s)f\left(u(s)\right)\mathrm{d}s\right\|_{L^p\left(D,\dot{U}^\nu\right)}\\
&+\left\|\int^t_0S(t-s)\mathrm{d}B(s)\right\|_{L^p\left(D,\dot{U}^\gamma\right)}\\
\lesssim& \left\|u_0\right\|_{L^p\left(D,\dot{U}^\gamma\right)}+\int^t_0\left\|u(s)\right\|_{L^p\left(D,\dot{U}^\gamma\right)}\mathrm{d}s+\frac{1}{\sqrt{\epsilon}}
\end{split}.
\end{equation*}
Then using Gr\"onwall inequality leads to
\begin{equation*}
\left\|u(t)\right\|_{L^p(D,\dot{U}^\gamma)}\lesssim \frac{1}{\sqrt{\epsilon}}+\left\|u_0\right\|_{L^p\left(D,\dot{U}^\gamma\right)}
\end{equation*}
Based on above estimate, we derive the following inequality
\begin{equation*}
\begin{split}
\left\|u(t)-u(s)\right\|_{L^p\left(D,U\right)}\lesssim& \left\|S(t-s)u(s)-u(s)\right\|_{L^p\left(D,U\right)}+\left\|\int^t_sS(t-r)f\left(u(r)\right)\mathrm{d}r\right\|_{L^p\left(D,U\right)}\\
&+\left\|\int^t_sS(t-r)\mathrm{d}B(r)\right\|_{L^p\left(D,U\right)}\\
\lesssim& (t-s)^{\min\{\frac{\gamma}{2\alpha},\frac{1}{2}\}}\left(\frac{1}{\sqrt{\epsilon}}+\left\|u_0\right\|_{L^p\left(D,\dot{U}^\gamma\right)}\right)
\end{split}.
\end{equation*}
This completes the proof of Lemma \ref{lem:2}.
\end{proof}

In fact, the H\"older regularity of $u(t)$ shows that if using the semi-implicit Euler method discretize directly Eq. \eqref{eq:3.1}, the strong convergence rate is almost impossible to be bigger than $\min\{\frac{\gamma}{2\alpha},\frac{1}{2}\}$ in time. Therefore, in order to improve the convergence rate of time discretization, we firstly need to get an equivalent form of Eq. \eqref{eq:3.1}, the regularity of whose solution is better than the one of the solution of Eq. \eqref{eq:3.1}. Let
\begin{equation}\label{eq:3.1-2}
 u(t)=z(t)+\int^t_0S(t-s)\mathrm{d}B(s).
 \end{equation}
Then
\begin{equation}\label{eq:3.2}
\mathrm{d} z(t)+A^\alpha  z(t)\mathrm{d}t=f\left(u(t)\right)\mathrm{d}t.
\end{equation}
The formal mild solution of Eq. \eqref{eq:3.2} is as follow:
\begin{equation}\label{eq:3.3}
z(t)=S(t)z_0+\int^t_0S(t-s)f\left(u(s)\right)\mathrm{d}s.
\end{equation}

Our purpose is to obtain a higher strong convergence rate in time than $\min\{\frac{\gamma}{2\alpha},\frac{1}{2}\}$ by discretizing Eqs. \eqref{eq:3.1-2} and \eqref{eq:3.2}. Thus, we need to the following estimates to discuss the temporal error convergence.

\begin{proposition}\label{prop:4}
Suppose that Assumptions \ref{as:2.1} are satisfied, $0<\epsilon<\frac{1}{2}$, $\gamma=2\rho+\alpha-\frac{d+\epsilon}{2}$, $\gamma>0$ and $\left\|u_0\right\|_{L^2\left(D,\dot{U}^{\max\left\{3\alpha,\gamma\right\}}\right)}<\infty$, then

$\mathrm{(i)}$ for $0<\gamma\le\alpha$,
\begin{equation*}
\left\|A^{\frac{\alpha}{2}}\left(z(t)-z(s)\right)\right\|_{L^2(D,U)}\lesssim (t-s)^{\frac{1}{2}+\frac{\gamma}{2\alpha}-\epsilon}\left(\frac{1}{\epsilon\sqrt{\epsilon}}+\frac{1}{\epsilon}\left\|u_0\right\|_{L^2\left(D,\dot{U}^{\max\left\{3\alpha,\gamma\right\}}\right)}\right);
\end{equation*}

$\mathrm{(ii)}$ for $\gamma>\alpha$,
\begin{equation*}
\left\|A^{\frac{\alpha}{2}}\left(z(t)-z(s)\right)\right\|_{L^2(D,U)}\lesssim (t-s)\left(1+\left\|u_0\right\|_{L^2\left(D,\dot{U}^{\max\left\{3\alpha,\gamma\right\}}\right)}\right).
\end{equation*}
\end{proposition}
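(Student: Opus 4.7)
The plan is to start from the mild identity
\begin{equation*}
z(t)-z(s) \;=\; \bigl(S(t-s)-I\bigr)z(s) \;+\; \int_s^t S(t-r)\,f\bigl(u(r)\bigr)\,\mathrm{d}r,
\end{equation*}
apply $A^{\alpha/2}$ to both sides, and bound the two pieces separately by the standard semigroup-smoothing estimates $\|A^\sigma S(\tau)v\|\lesssim \tau^{-\sigma/\alpha}\|v\|$ for $\sigma\ge 0$ and $\|(S(\tau)-I)v\|\lesssim \tau^\theta\|A^{\alpha\theta}v\|$ for $\theta\in[0,1]$. Throughout I would use Assumption \ref{as:2.1} with $\nu=\gamma$ to move $A^{\gamma/2}$ through $f$, and Lemma \ref{lem:2} to control $\|u(r)\|_{L^2(D,\dot U^\gamma)}$ by $\tfrac{1}{\sqrt\epsilon}+\|u_0\|_{L^2(D,\dot U^\gamma)}$. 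Note that setting $t=0$ in \eqref{eq:3.1-2} gives $z_0=u_0$, so the initial-data hypothesis of the proposition feeds directly into the mild formula \eqref{eq:3.3}.

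For the integral term, factor $A^{\alpha/2}S(t-r)=A^{(\alpha-\gamma)/2}S(t-r)A^{\gamma/2}$; taking $L^2(D,U)$ norms produces the bound
\begin{equation*}
\int_s^t (t-r)^{-1/2+\gamma/(2\alpha)}\bigl(1+\|u(r)\|_{L^2(D,\dot U^\gamma)}\bigr)\,\mathrm{d}r \;\lesssim\; (t-s)^{1/2+\gamma/(2\alpha)}\Bigl(\tfrac{1}{\sqrt\epsilon}+\|u_0\|_{L^2(D,\dot U^\gamma)}\Bigr),
\end{equation*}
which is absorbed by the bound of case (i) (since $(t-s)^{1/2+\gamma/(2\alpha)}\le (t-s)^{1/2+\gamma/(2\alpha)-\epsilon}$) and, in case (ii), is $\lesssim (t-s)$ because $1/2+\gamma/(2\alpha)>1$ and $t-s\le T$.

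For the smoothing term, I would choose $\theta=1/2+\gamma/(2\alpha)-\epsilon$ in case (i) and $\theta=1$ in case (ii), which reduces the task to estimating $\|A^\mu z(s)\|_{L^2(D,U)}$ with $\mu=\alpha/2+\alpha\theta$, i.e.\ $\mu=\alpha+\gamma/2-\alpha\epsilon$ in (i) and $\mu=3\alpha/2$ in (ii). Inserting the mild formula for $z(s)$ and applying $A^\mu$, the initial-data piece is $\|u_0\|_{\dot U^{2\mu}}$, which in both cases satisfies $2\mu\le 3\alpha\le\max\{3\alpha,\gamma\}$ and is therefore controlled by the hypothesis. The forcing piece, after factoring $A^\mu S(s-r)=A^{\mu-\gamma/2}S(s-r)A^{\gamma/2}$, is bounded by $\int_0^s(s-r)^{-(\mu-\gamma/2)/\alpha}\bigl(1+\|u(r)\|_{L^2(D,\dot U^\gamma)}\bigr)\mathrm{d}r$; in case (i) the inner exponent is $-1+\epsilon$, generating the characteristic $1/\epsilon$ that combines with $1/\sqrt\epsilon$ from Lemma \ref{lem:2} to yield exactly the $\frac{1}{\epsilon\sqrt\epsilon}+\frac{1}{\epsilon}\|u_0\|_{\dot U^{\max\{3\alpha,\gamma\}}}$ in the statement, whereas in case (ii) the exponent $-(3\alpha-\gamma)/(2\alpha)$ lies in $(-1,0)$ and the integral contributes a finite constant depending only on $T$ and $\gamma/\alpha$.

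The main obstacle I expect is the simultaneous bookkeeping of three constraints on the pair $(\theta,\mu)$ in case (i): one needs $\theta\in[0,1]$ to use the smoothing inequality, one needs $(\mu-\gamma/2)/\alpha<1$ so the inner time integral is integrable, and one needs $2\mu\le\max\{3\alpha,\gamma\}$ so the hypothesis on $u_0$ suffices. The forced choice $\theta=1/2+\gamma/(2\alpha)-\epsilon$ is precisely what makes the second condition marginal (exponent $-1+\epsilon$), and it is this marginal integrability that accounts for the small $-\epsilon$ loss in the Hölder exponent as well as the non-negative powers of $1/\epsilon$ appearing in the constant.
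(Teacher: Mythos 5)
Your proof is correct and follows essentially the same route as the paper: both reduce the estimate to the semigroup smoothing bounds $\|A^{\sigma}S(\tau)v\|\lesssim\tau^{-\sigma/\alpha}\|v\|$ and $\|(S(\tau)-I)v\|\lesssim\tau^{\theta}\|A^{\alpha\theta}v\|$, make the same exponent choice $\theta=\frac{1}{2}+\frac{\gamma}{2\alpha}-\epsilon$ (resp.\ $\theta=1$), and extract the $1/\epsilon$ in the constant from the marginally integrable kernel $(s-r)^{-1+\epsilon}$. The only difference is organizational: you group the initial-data and history contributions into the single term $(S(t-s)-I)z(s)$ and pass through an intermediate bound on $\|A^{\alpha+\gamma/2-\alpha\epsilon}z(s)\|$, whereas the paper expands $z(s)$ via the mild formula and estimates $(S(t)-S(s))z_0$ and $\int_0^s A^{\alpha/2}\left(S(t-r)-S(s-r)\right)f\left(u(r)\right)\mathrm{d}r$ separately --- algebraically these are the same terms with the same bounds.
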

\begin{proof}
As $0<\gamma\le\alpha$, according to the regularity of $u(t)$, Lemma \ref{le:Sec2} and Eq. \eqref{eq:3.3}, we derive
\begin{equation*}
\begin{split}
&\left\|A^{\frac{\alpha}{2}}\left(z(t)-z(s)\right)\right\|_{L^2(D,U)}\\
&\lesssim  \left\|A^{\frac{\alpha}{2}}\left(S(t)-S(s)\right)z_0\right\|_{L^2(D,U)}+\left\|\int^t_sA^{\frac{\alpha}{2}}S(t-r)f\left(u(r)\right)\mathrm{d}r\right\|_{L^2(D,U)}\\
&~~~~+\left\|\int^s_0A^{\frac{\alpha}{2}}\left(S(t-r)-S(s-r)\right)f\left(u(r)\right)\mathrm{d}r\right\|_{L^2(D,U)}\\
&\lesssim  (t-s)^{\frac{1}{2}+\frac{\gamma}{2\alpha}}\left\|A^{\alpha+\frac{\gamma}{2}} u_0\right\|_{L^2(D,U)}+\int^t_s(t-r)^{-\frac{1}{2}+\frac{\gamma}{2\alpha}}\left\|A^{\frac{\gamma}{2}}f\left(u(r)\right)\right\|_{L^2(D,U)}\mathrm{d}r\\
&~~~~+\int^s_0\left\|\left(A^\alpha(s-r)\right)^{-1+\epsilon}A^{\alpha+\frac{\gamma}{2}-\alpha\epsilon}(t-s)^{\frac{1}{2}+\frac{\gamma}{2\alpha}-\epsilon}f\left(u(r)\right)\right\|_{L^2(D,U)}\mathrm{d}r\\
&\lesssim  (t-s)^{\frac{1}{2}+\frac{\gamma}{2\alpha}-\epsilon}\left(\frac{1}{\epsilon\sqrt{\epsilon}}+\frac{1}{\epsilon}\left\|A^{\frac{3\alpha}{2}} z_0\right\|_{L^2(D,U)}\right).
\end{split}
\end{equation*}
For $\alpha<\gamma<4\alpha$, similarly we have
\begin{equation*}
\begin{split}
&\left\|A^{\frac{\alpha}{2}}\left(z(t)-z(s)\right)\right\|_{L^2(D,U)}\\
&\lesssim  \left\|A^{\frac{\alpha}{2}}\left(S(t)-S(s)\right)z_0\right\|_{L^2(D,U)}+\left\|\int^t_sA^{\frac{\alpha}{2}}S(t-r)f\left(u(r)\right)\mathrm{d}r\right\|_{L^2(D,U)}\\
&~~~~+\left\|\int^s_0A^{\frac{\alpha}{2}}\left(S(t-r)-S(s-r)\right)f\left(u(r)\right)\mathrm{d}r\right\|_{L^2(D,U)}\\
&\lesssim  (t-s)\left\|A^{\frac{3\alpha}{2}} z_0\right\|_{L^2(D,U)}+\int^t_s\left\|A^{\frac{\alpha}{2}}f\left(u(r)\right)\right\|_{L^2(D,U)}\mathrm{d}r\\
&~~~~+\int^s_0\left\|\left(A^\alpha(s-r)\right)^{-1+\frac{\gamma-\alpha}{3\alpha}}A^{\frac{3\alpha}{2}}(t-s)f\left(u(r)\right)\right\|_{L^2(D,U)}\mathrm{d}r\\
&\lesssim  (t-s)\left\|A^{\max\left\{\frac{3\alpha}{2},\frac{\gamma}{2}\right\}} z_0\right\|_{L^2(D,U)}.
\end{split}
\end{equation*}
If $\gamma\ge4\alpha$, then we have the following fact£º
\begin{equation*}
\begin{split}
&\left\|A^{\frac{\alpha}{2}}\left(z(t)-z(s)\right)\right\|_{L^2(D,U)}\\
&\lesssim  (t-s)\left\|A^{\frac{3\alpha}{2}} z_0\right\|_{L^2(D,U)}+\int^t_s\left\|A^{\frac{\alpha}{2}}f\left(u(r)\right)\right\|_{L^2(D,U)}\mathrm{d}r\\
&~~~~+\left\|\int^s_0A^{\frac{3\alpha}{2}}(t-s)f\left(u(r)\right)\mathrm{d}r\right\|_{L^2(D,U)}\\
&\lesssim  (t-s)\left\|A^{\frac{3\gamma}{8}} z_0\right\|_{L^2(D,U)}+\int^t_s\left\|A^{\frac{\gamma}{8}}f\left(u(r)\right)\right\|_{L^2(D,U)}\mathrm{d}r\\
&~~~~+\int^s_0\left\|A^{\frac{3\gamma}{8}}(t-s)f\left(u(r)\right)\right\|_{L^2(D,U)}\mathrm{d}r\\
&\lesssim  (t-s)\left(1+\left\|A^{\frac{3\gamma}{8}} z_0\right\|_{L^2(D,U)}\right).
\end{split}
\end{equation*}
The proof of Proposition \ref{prop:4} is complete.
\end{proof}

In addition, we also need to get the regularity estimate of $z(t)$ in time.

\begin{proposition}\label{prop:3}
Suppose that Assumptions \ref{as:2.1} are satisfied, $\left\|u_0\right\|_{L^2\left(D,\dot{U}^{\max\left\{2\alpha,\gamma\right\}}\right)}<\infty$, $0<\epsilon<\frac{1}{2}$, $\gamma=2\rho+\alpha-\frac{d+\epsilon}{2}$ and $\gamma>0$. Then
\begin{equation*}
\left\|z(t)-z(s)\right\|_{L^2(D,U)}\lesssim (t-s)\left\|u_0\right\|_{L^2\left(D,\dot{U}^{\max\left\{2\alpha,\gamma\right\}}\right)}.
\end{equation*}
\end{proposition}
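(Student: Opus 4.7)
The plan is to mimic the decomposition already used in Proposition~\ref{prop:4}: starting from the mild formula \eqref{eq:3.3} and the identity $S(t-r)-S(s-r)=(S(t-s)-I)S(s-r)$, write
\begin{equation*}
z(t)-z(s)=\bigl(S(t-s)-I\bigr)S(s)z_0+\int_0^s\bigl(S(t-s)-I\bigr)S(s-r)f(u(r))\,\mathrm{d}r+\int_s^t S(t-r)f(u(r))\,\mathrm{d}r,
\end{equation*}
and bound each of the three pieces by $(t-s)$ times a constant depending on $\|u_0\|_{L^2(D,\dot U^{\max\{2\alpha,\gamma\}})}$. The standing tools will be the analytic-semigroup estimates $\|A^{\beta}S(\tau)\|\lesssim \tau^{-\beta/\alpha}$ for $\beta\ge 0$ and $\|(S(\tau)-I)v\|\lesssim \tau\,\|A^\alpha v\|$ for $v\in\dot U^{2\alpha}$, together with the nonlinear bound $\|A^{\nu/2}f(u)\|\lesssim 1+\|A^{\nu/2}u\|$ from Assumption~\ref{as:2.1} and the a priori regularity of $u$ from Lemma~\ref{lem:2}.

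For the initial-datum piece, the smoothing bound combined with contractivity of $S(s)$ gives a bound of order $(t-s)\|A^\alpha z_0\|_{L^2(D,U)}$, which is exactly $(t-s)\|u_0\|_{L^2(D,\dot U^{2\alpha})}$. For the last piece, $\|S(t-r)\|\le 1$ and Assumption~\ref{as:2.1} reduce matters to $\int_s^t(1+\|u(r)\|_{L^2(D,U)})\,\mathrm{d}r\lesssim (t-s)$, since $\sup_{r\in[0,T]}\|u(r)\|_{L^2(D,U)}$ is finite by Lemma~\ref{lem:2}.

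The main obstacle is the middle integral. Applying the smoothing bound pointwise in $r$ leaves me needing to control $\|A^\alpha S(s-r)f(u(r))\|_{L^2(D,U)}$, and here I would split into two regimes exactly matching the $\max\{2\alpha,\gamma\}$ in the hypothesis. If $\gamma\ge 2\alpha$, the assumption $u_0\in\dot U^{\gamma}$ together with Lemma~\ref{lem:2} and the continuous embedding $\dot U^\gamma\hookrightarrow \dot U^{2\alpha}$ (valid since $\lambda_1>0$) yield $\|u(r)\|_{L^2(D,\dot U^{2\alpha})}<\infty$, so Assumption~\ref{as:2.1} gives $\|A^\alpha f(u(r))\|_{L^2(D,U)}\lesssim 1+\|u(r)\|_{L^2(D,\dot U^{2\alpha})}$, uniformly bounded; no singular factor appears and the $r$-integration only produces a factor $s\le T$. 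If instead $\gamma<2\alpha$, I would commute fractional powers with the semigroup,
\begin{equation*}
A^\alpha S(s-r)f(u(r))=A^{\alpha-\gamma/2}S(s-r)\,A^{\gamma/2}f(u(r)),
\end{equation*}
use $\|A^{\alpha-\gamma/2}S(s-r)\|\lesssim(s-r)^{-1+\gamma/(2\alpha)}$, and bound $\|A^{\gamma/2}f(u(r))\|_{L^2(D,U)}\lesssim 1+\|u(r)\|_{L^2(D,\dot U^\gamma)}$ by Assumption~\ref{as:2.1} and Lemma~\ref{lem:2}. The remaining integral $\int_0^s(s-r)^{-1+\gamma/(2\alpha)}\,\mathrm{d}r=\tfrac{2\alpha}{\gamma}s^{\gamma/(2\alpha)}$ is finite uniformly in $s\in[0,T]$ because $\gamma>0$. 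Combining the two regimes, each of the three pieces is of order $(t-s)$ with prefactor at most $\|u_0\|_{L^2(D,\dot U^{\max\{2\alpha,\gamma\}})}$, matching the claim.
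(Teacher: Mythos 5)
Your argument is correct and is essentially the route the paper intends: the paper's "proof" of Proposition~\ref{prop:3} is simply a pointer to the proof of Proposition~\ref{prop:4}, which uses the same three-term splitting of $z(t)-z(s)$ via the mild formula \eqref{eq:3.3}, the semigroup smoothing estimates, and a case distinction on whether $\gamma$ exceeds a multiple of $\alpha$ — exactly as you do. The only cosmetic remark is that your final bound, like the paper's own statement, silently absorbs the additive constants coming from $\|f(u)\|\lesssim 1+\|u\|$ and Lemma~\ref{lem:2} into the prefactor $\left\|u_0\right\|_{L^2\left(D,\dot{U}^{\max\{2\alpha,\gamma\}}\right)}$.
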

Proving Proposition \ref{prop:3} can refer to the proof of Proposition \ref{prop:4}.
\section{Temporal Discretization} \label{sec:4}
In this section, we will briefly recall the semi-implicit Euler scheme, and then modifying this scheme improve the convergence rate in time.

Taking fixed size $\tau=t_{k+1}-t_{k}$ with $k=0,1,2,\dots,\frac{T}{\tau}$. Then using the semi-implicit Euler scheme leads to the temporal semi-discretization of Eq. \eqref{eq:3.2},
\begin{equation}\label{eq:4.1-1}
\begin{split}
z_{n+1}-z_{n}+\tau A^{\alpha}z_{n+1}=&\tau f\left(u_{n}\right).
\end{split}
\end{equation}
The strong convergence rate of scheme \eqref{eq:4.1-1} is not more than $\min\{\frac{\gamma}{2\alpha},\frac{1}{2}\}$. This fact can be confirmed by the following error estimate.
\begin{equation*}
\begin{split}
&\left\|\int^{t_{n+1}}_{t_n}f\left(u(s)\right)\mathrm{d}s-\tau f\left(u_{n}\right)\right\|_{L^2(D,U)}\\
&\lesssim\left\|\int^{t_{n+1}}_{t_n}\left(f\left(u(s)\right)-f\left(u(t_n)\right)\right)\mathrm{d}s\right\|_{L^2(D,U)}+\left\|\tau\left(f\left(u(t_n)\right)- f\left(u_{n}\right)\right)\right\|_{L^2(D,U)}\\
&\lesssim\tau^{1+\min\{\frac{\gamma}{2\alpha},\frac{1}{2}\}}\left(\frac{1}{\sqrt{\epsilon}}+\left\|u_0\right\|_{L^2\left(D,\dot{U}^\gamma\right)}\right)+\tau\left\|u(t_n)- u_{n}\right\|_{L^2(D,U)}
\end{split}.
\end{equation*}
In order to obtain the higher order strong approximation of $u(t)$, we will modify the discrete scheme of $\int^{t_{n+1}}_{t_n}f\left(u(s)\right)\mathrm{d}s$ in Eq. \eqref{eq:4.1-1}. The inspiration of high accuracy discretization for $\int^{t_{n+1}}_{t_n}f\left(u(s)\right)\mathrm{d}s$  comes from Lagrange mean value theorem and independent increments of Brownian motion.
\begin{equation*}
\int^{t_{n+1}}_{t_n}\left(f\left(u(s)\right)-f\left(u(t_n)\right)\right)\mathrm{d}s=\int^{t_{n+1}}_{t_n}f'\left((1-\theta_0)u(s)+\theta_0u(t_n)\right)\left(u(s)-u(t_n)\right)\mathrm{d}s,
\end{equation*}
where $0<\theta_0<1$. Then, using Eq. \eqref{eq:3.1-2} leads to
\begin{equation}\label{eq:4.1-2}
\begin{split}
&\int^{t_{n+1}}_{t_n}\left(f\left(u(s)\right)-f\left(u(t_n)\right)\right)\mathrm{d}s\\
&=\int^{t_{n+1}}_{t_n}f'\left((1-\theta_0)u(s)+\theta_0u(t_n)\right)\left(z(s)-z(t_n)\right)\mathrm{d}s\\
&~~~~+\int^{t_{n+1}}_{t_n}f'\left((1-\theta_0)u(s)+\theta_0u(t_n)\right)\left[\int^s_0S(s-r)\mathrm{d}B(r)-\int^{t_n}_0S(t_n-r)\mathrm{d}B(r)\right]\mathrm{d}s.
\end{split}
\end{equation}
The Proposition \ref{prop:3} implies that
\begin{equation*}
\left\|\int^{t_{n+1}}_{t_n}f'\left((1-\theta_0)u(s)+\theta_0u(t_n)\right)\left(z(s)-z(t_n)\right)\mathrm{d}s\right\|_{L^2(D,U)}\lesssim\tau^2\left\|u_0\right\|_{L^2\left(D,\dot{U}^{\max\left\{2\alpha,\gamma\right\}}\right)}.
\end{equation*}
Thus, the rest of the problem is how to get the high accuracy approximation of stochastic integral in Eq. \eqref{eq:4.1-2}. We rewrite stochastic integral as
\begin{equation*}
\int^{t_{n+1}}_{t_n}\left[\int^s_{t_n}S(s-r)\mathrm{d}B(r)+\int^{t_n}_0S(s-r)\mathrm{d}B(r)-\int^{t_n}_0S(t_n-r)\mathrm{d}B(r)\right]\mathrm{d}s.
\end{equation*}
The independent increments of Brownian motion ensures that $\int^{t_{n+1}}_{t_n}\int^s_{t_n}S(s-r)\mathrm{d}B(r)\mathrm{d}s$ does not prevent us from obtaining the higher convergence rate of the modifying scheme. Therefore, we only consider the following equation.
\begin{equation*}
\begin{split}
&\int^{t_{n+1}}_{t_n}\left[\int^{t_n}_0S(s-r)\mathrm{d}B(r)-\int^{t_n}_0S(t_n-r)\mathrm{d}B(r)\right]\mathrm{d}s\\
&=A^{-\alpha}\int^{t_n}_0S(t_n-r)\mathrm{d}B(r)-A^{-\alpha}\int^{t_{n}}_0S(t_{n+1}-r)\mathrm{d}B(r)-\tau\int^{t_n}_0S(t_n-r)\mathrm{d}B(r)
\end{split}.
\end{equation*}
Using It\^{o} isometry to leads
\begin{equation*}
\mathrm{E}\left[\left(\int^{t_n}_0\mathrm{e}^{-\lambda^\alpha_i(t_n-s)}\mathrm{d}\beta^i(s)\right)^2\right]=\frac{1-\mathrm{e}^{-2\lambda_it_n}}{2\lambda^\alpha_i},
 \end{equation*}
which implies that
 \begin{equation*}
\int^{t_n}_0\mathrm{e}^{-\lambda^\alpha_i(t_n-s)}\mathrm{d}\beta^i(s)
 \end{equation*}
is a centred Gaussian random variable with variance $\frac{1-\mathrm{e}^{-2\lambda_it_n}}{2\lambda^\alpha_i}$. Then the simulation of stochastic integral $\int^{t_n}_0S(t_n-s)\mathrm{d}B(s)$ is easily implementable without approximation. Meanwhile, using the similar method get the simulation of $\int^{t_{n+1}}_{t_n}\int^{t_n}_0S(s-r)\mathrm{d}B(r)\mathrm{d}s$. To sum up, we can get a semi-discretization, whose convergence rate is better than one of Eq. \eqref{eq:4.1-1} in time. The detailed method is as follows

\begin{equation*}
\begin{split}
z_{1}-z_{0}+\tau A^{\alpha}z_{1}=&\tau f\left(u_{0}\right)
\end{split}
\end{equation*}
and for $n\ge1$,
\begin{equation}\label{eq:4.1}
\begin{split}
&z_{n+1}-z_{n}+\tau A^{\alpha}z_{n+1}\\
&=\tau f\left(u_{n}\right)-\tau \frac{f\left(u_{n}\right)-f\left(u_{n-1}\right)}{u_{n}-u_{n-1}}\int^{t_n}_0S\left(t_n-r\right)\mathrm{d}B(r)\\
&~~~~+\frac{f\left(u_{n}\right)-f\left(u_{n-1}\right)}{u_{n}-u_{n-1}}\int^{t_n}_0A^{-\alpha}\left[S\left(t_n-r\right)-S\left(t_{n+1}-r\right)\right]\mathrm{d}B(r).
\end{split}
\end{equation}
Applying the recursion gives
\begin{equation}\label{eq:4.2}
\begin{split}
z_{n+1}=&\frac{z_{0}}{\left(1+\tau A^{\alpha}\right)^{n+1}}+\tau \sum^n_{k=0}\frac{f\left(u_{k}\right)}{\left(1+\tau A^{\alpha}\right)^{n-k+1}}\\
&+\tau\sum^n_{k=1}\frac{f\left(u_{k}\right)-f\left(u_{k-1}\right)}{\left(1+\tau A^{\alpha}\right)^{n-k+1}\left(u_{k}-u_{k-1}\right)}\int^{t_k}_0S\left(t_k-r\right)\mathrm{d}B(r)\\
&-\sum^n_{k=1}\frac{f\left(u_{k}\right)-f\left(u_{k-1}\right)}{\left(1+\tau A^{\alpha}\right)^{n-k+1}\left(u_{k}-u_{k-1}\right)}\int^{t_k}_0A^{-\alpha}\left[S\left(t_k-r\right)-S\left(t_{k+1}-r\right)\right]\mathrm{d}B(r).
\end{split}
\end{equation}
Using Eq. \eqref{eq:3.1-2}, then we obtain the approximation of $u(t_n)$ in time
\begin{equation*}
 u_n=z_n+\int^{t_n}_0S(t_n-s)\mathrm{d}B(s),
 \end{equation*}
which shows
 \begin{equation*}
 u(t_n)-u_n=z(t_n)-z_n
 \end{equation*}
 and
 \begin{equation*}
 \left\|u_n\right\|_{L^2\left(D,U\right)}\lesssim\left\|z_n\right\|_{L^2\left(D,U\right)}+\left\|\int^{t_n}_0S(t_n-s)\mathrm{d}B(s)\right\|_{L^2\left(D,U\right)}.
 \end{equation*}
Thus, we analyze directly the stability and error estimates of $z_n$.

\begin{theorem}\label{th:1}
Let $z_n$ be expressed by Eq. \eqref{eq:4.2}. Suppose that Assumptions \ref{as:2.1} are satisfied, $0<\epsilon<\frac{1}{2}$, $\left\|u_0\right\|_{L^2\left(D,\dot{U}^\gamma\right)}<\infty$, $\gamma=2\rho+\alpha-\frac{d+\epsilon}{2}$, $\rho\ge0$ and $\gamma>0$. Then
\begin{equation*}
\left\|z_{n}\right\|_{L^2\left(D,U\right)}\lesssim \left\|z_{0}\right\|_{L^2\left(D,U\right)}+1.
\end{equation*}
\end{theorem}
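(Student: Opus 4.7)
The plan is to apply the triangle inequality in $L^2(D,U)$ directly to the closed recursion \eqref{eq:4.2} and estimate the four resulting contributions separately, closing the argument with a discrete Gr\"onwall inequality in $\|z_n\|_{L^2(D,U)}$.

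First, for the deterministic initial piece $(1+\tau A^\alpha)^{-(n+1)} z_0$, spectral calculus and positivity of $\lambda_i^\alpha$ give that $(1+\tau A^\alpha)^{-1}$ has operator norm at most one on $U$, so this term is bounded by $\|z_0\|_{L^2(D,U)}$. For the second contribution $\tau\sum_{k=0}^n (1+\tau A^\alpha)^{-(n-k+1)} f(u_k)$, the same operator bound reduces matters to $\tau\sum_{k=0}^n \|f(u_k)\|_{L^2(D,U)}$. Here I invoke Assumption \ref{as:2.1} (bounded $f'$ yields linear growth $\|f(u)\|\lesssim 1+\|u\|$) together with the identity $u_k = z_k + \int_0^{t_k} S(t_k-s)\,\mathrm{d}B(s)$, so that $\|u_k\|_{L^2(D,U)} \lesssim \|z_k\|_{L^2(D,U)} + C$, the additive constant coming from the uniform-in-$k$ bound on the Ornstein--Uhlenbeck integral obtained via It\^{o} isometry, Lemma \ref{le:Sec2}, and the hypothesis $\gamma>0$.

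The two stochastic sums are the heart of the argument. The coefficient $[f(u_k)-f(u_{k-1})]/(u_k-u_{k-1})$ must be read pointwise in $x$: by the mean value theorem it equals $f'(\xi_k(x))$ for some intermediate value, and Assumption \ref{as:2.1} yields a uniform bound, hence a bound on the corresponding pointwise multiplication operator on $U=L^2(D)$ that is independent of $k$ and $\tau$. Thus each stochastic summand is controlled in $L^2(D,U)$ norm by the norm of its stochastic integral factor. For the first sum, $\bigl\|\int_0^{t_k} S(t_k-r)\,\mathrm{d}B(r)\bigr\|_{L^2(D,U)}^2 = \sum_i \sigma_i^2 (1-\mathrm{e}^{-2\lambda_i^\alpha t_k})/(2\lambda_i^\alpha)$ is finite and bounded uniformly in $k$, since $|\sigma_i|\le\lambda_i^{-\rho}$ combined with the asymptotics from Lemma \ref{le:Sec2} make the series converge thanks to $\gamma>0$. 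For the second sum, the key operator identity is $A^{-\alpha}[S(t_k-r)-S(t_{k+1}-r)] = A^{-\alpha}S(t_k-r)(I-S(\tau))$, whose eigenvalues $\lambda_i^{-\alpha}\mathrm{e}^{-\lambda_i^\alpha(t_k-r)}(1-\mathrm{e}^{-\lambda_i^\alpha\tau})$ are majorised by $\min\{\lambda_i^{-\alpha},\tau\}\mathrm{e}^{-\lambda_i^\alpha(t_k-r)}$; another application of It\^{o} isometry then produces a summand of size $O(\tau)$, so after summing over $k=1,\dots,n$ this contribution is $O(1)$.

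Collecting the four estimates produces an inequality of the form
\begin{equation*}
\|z_n\|_{L^2(D,U)} \le C\bigl(\|z_0\|_{L^2(D,U)}+1\bigr) + C\tau\sum_{k=0}^{n-1}\|z_k\|_{L^2(D,U)},
\end{equation*}
and the claimed bound follows from the standard discrete Gr\"onwall inequality (using $n\tau\le T$). The main technical obstacle I anticipate is the careful treatment of the pointwise multiplier $f'(\xi_k)$: one must verify that multiplication by a uniformly bounded function of $x$ is admissible in the $L^2(D,U)$ norm and decouples cleanly from the stochastic integral, so that It\^{o} isometry applies without producing uncontrolled cross terms. A secondary subtlety is ensuring that the first-step value $z_1$ produced by the non-modified initial scheme fits into the same bookkeeping once the stochastic sums over $k=1,\dots,n$ are split off from the $k=0$ term.
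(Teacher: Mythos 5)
Your proposal is correct and follows essentially the same route as the paper's proof: triangle inequality applied to the closed recursion \eqref{eq:4.2}, the mean value theorem to replace the divided difference $[f(u_k)-f(u_{k-1})]/(u_k-u_{k-1})$ by a uniformly bounded $f'(\xi_k)$, It\^{o} isometry together with the eigenvalue asymptotics of Lemma \ref{le:Sec2} and $\gamma>0$ to bound the stochastic integrals uniformly, and a discrete Gr\"onwall inequality to close. The only cosmetic difference is that you bound the fourth term via the spectral estimate $\lambda_i^{-\alpha}(1-\mathrm{e}^{-\lambda_i^\alpha\tau})\le\min\{\lambda_i^{-\alpha},\tau\}$ whereas the paper writes $A^{-\alpha}[S(t_k-r)-S(t_{k+1}-r)]=\int_{t_k}^{t_{k+1}}S(r_1-r)\,\mathrm{d}r_1$ and pulls the $r_1$-integral outside; these are equivalent.
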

\begin{proof}Let
\begin{equation*}
\Psi_1 u_{k}=(1-\theta)u_{k}+\theta u_{k-1},\quad  0<\theta<1.
\end{equation*}
Using triangle inequality and Lagrange mean value theorem leads to
\begin{equation*}
\begin{split}
&\left\|z_{n+1}\right\|_{L^2\left(D,U\right)}\\
&\lesssim\left\|\frac{z_{0}}{\left(1+\tau A^{\alpha}\right)^{n+1}}\right\|_{L^2\left(D,U\right)}+\tau \sum^n_{k=0}\left\|\frac{f\left(u_{k}\right)}{\left(1+\tau A^{\alpha}\right)^{n-k+1}}\right\|_{L^2\left(D,U\right)}\\
&~~~~+\tau\sum^n_{k=1}\left\|\frac{f\left(u_{k}\right)-f\left(u_{k-1}\right)}{\left(1+\tau A^{\alpha}\right)^{n-k+1}\left(u_{k}-u_{k-1}\right)}\int^{t_k}_0S\left(t_k-r\right)\mathrm{d}B(r)\right\|_{L^2\left(D,U\right)}\\
&~~~~+\sum^n_{k=1}\left\|\frac{f\left(u_{k}\right)-f\left(u_{k-1}\right)}{\left(1+\tau A^{\alpha}\right)^{n-k+1}\left(u_{k}-u_{k-1}\right)}\int^{t_k}_0A^{-\alpha}\left[S\left(t_k-r\right)-S\left(t_{k+1}-r\right)\right]\mathrm{d}B(r)\right\|_{L^2\left(D,U\right)}\\
&\lesssim\left\|z_{0}\right\|_{L^2\left(D,U\right)}+\tau \sum^n_{k=0}\left(\left\|z_{k}\right\|_{L^2\left(D,U\right)}+\left\|\int^{t_k}_0S\left(t_k-r\right)\mathrm{d}B(r)\right\|_{L^2\left(D,U\right)}\right)+1\\
&~~~~+\tau\sum^n_{k=1}\left\|f'\left(\Psi_1 u_{k}\right)\int^{t_k}_0S\left(t_k-r\right)\mathrm{d}B(r)\right\|_{L^2\left(D,U\right)}\\
&~~~~+\sum^n_{k=1}\left\|f'\left(\Psi_1 u_{k}\right)\int^{t_k}_0A^{-\alpha}\left[S\left(t_k-r\right)-S\left(t_{k+1}-r\right)\right]\mathrm{d}B(r)\right\|_{L^2\left(D,U\right)}\\
&\lesssim\left\|z_{0}\right\|_{L^2\left(D,U\right)}+\tau \sum^n_{k=0}\left(\left\|z_{k}\right\|_{L^2\left(D,U\right)}+\left\|\int^{t_k}_0S\left(t_k-r\right)\mathrm{d}B(r)\right\|_{L^2\left(D,U\right)}\right)+1\\
&~~~~+\tau\sum^n_{k=1}\left\|\int^{t_k}_0S\left(t_k-r\right)\mathrm{d}B(r)\right\|_{L^2\left(D,U\right)}+\sum^n_{k=1}\int^{t_{k+1}}_{t_k}\left\|\int^{t_k}_0S\left(r_1-r\right)\mathrm{d}B(r)\right\|_{L^2\left(D,U\right)}\mathrm{d}r_1\\
&\lesssim\tau\sum_{k=0}^{n}\mathrm{E}\left[\left\|z_{k}\right\|_{L^2\left(D,U\right)}\right]+\left\|z_{0}\right\|_{L^2\left(D,U\right)}+1.
\end{split}
\end{equation*}
Thanks to the discrete Gr\"onwall inequality, we have
\begin{equation*}
\mathrm{E}\left[\left\|z_{n+1}\right\|_{L^2\left(D,U\right)}\right]\lesssim \left\|z_{0}\right\|_{L^2\left(D,U\right)}+1.
\end{equation*}
\end{proof}

Next, we discuss the convergence behavior of $z\left(t_{n+1}\right)-z_{n+1}$ in the sense of mean-squared $L^2$-norm. Based on Proposition \ref{prop:4}, we have two cases to look at. Let $e_{n+1}=z\left(t_{n+1}\right)-z_{n+1}$. One can obtain the following estimates by using Lemma \ref{lem:2}, Proposition \ref{prop:4} and Proposition \ref{prop:3}.

\begin{theorem}\label{th:2}
Let $z_n$ be expressed by Eq. \eqref{eq:4.2}. Suppose that Assumptions \ref{as:2.1} are satisfied, $0<\epsilon<\frac{1}{2}$, $\left\|u_0\right\|_{L^2\left(D,\dot{U}^{\max\left\{3\alpha,\gamma\right\}}\right)}<\infty$, $\gamma=2\rho+\alpha-\frac{d+\epsilon}{2}$, $\rho\ge0$, $\gamma>0$ and
\begin{equation*}
 u_n=z_n+\int^{t_n}_0S(t_n-s)\mathrm{d}B(s).
 \end{equation*}
Then

$\mathrm{(i)}$ for $0<\gamma\le\alpha$
\begin{equation*}
\left\|e_{n+1}\right\|_{L^2\left(D,U\right)}\lesssim\frac{1}{\epsilon\sqrt{\epsilon}}\tau^{\frac{\gamma}{\alpha}-\epsilon}\left(\left\|u_0\right\|_{L^2\left(D,\dot{U}^{\max\left\{3\alpha,\gamma\right\}}\right)}+1\right).
\end{equation*}

$\mathrm{(ii)}$  for $\gamma>\alpha$ \begin{equation*}
\left\|e_{n+1}\right\|_{L^2\left(D,U\right)}\lesssim \tau\left(\left\|u_0\right\|_{L^2\left(D,\dot{U}^{\max\left\{3\alpha,\gamma\right\}}\right)}+1\right).
\end{equation*}
\end{theorem}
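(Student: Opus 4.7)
The plan is to bound $e_{n+1}=z(t_{n+1})-z_{n+1}$ by subtracting the mild-solution representation \eqref{eq:3.3} evaluated at $t=t_{n+1}$ from the closed-form recursion \eqref{eq:4.2}. Writing $S(t_{n+1}-t_k)$ against $(1+\tau A^\alpha)^{-(n-k+1)}$ in each matched pair and splitting the continuous integral as $\sum_{k=0}^{n}\int_{t_k}^{t_{k+1}}$, I would decompose $e_{n+1}$ into three blocks: (i) an initial-data smoothing error coming from $[S(t_{n+1})-(1+\tau A^\alpha)^{-(n+1)}]z_0$; (ii) a rational-approximation error from replacing $S(t_{n+1}-t_k)$ by $(1+\tau A^\alpha)^{-(n-k+1)}$ inside the deterministic integral; and (iii) the consistency error in approximating $\int_{t_k}^{t_{k+1}}f(u(s))\,ds$ by the right-hand side of \eqref{eq:4.1}. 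By standard rational-approximation estimates for $(1+\tau A^\alpha)^{-1}$ versus $S(\tau)$ combined with Assumption \ref{as:2.1}, pieces (i) and (ii) each contribute at most $O(\tau)\bigl(\|u_0\|_{L^2(D,\dot U^{\max\{3\alpha,\gamma\}})}+1\bigr)$.

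The heart of the argument is piece (iii). Using Lagrange's mean value theorem and \eqref{eq:3.1-2} I would expand
\begin{align*}
\int_{t_k}^{t_{k+1}}\!\![f(u(s))-f(u(t_k))]\,ds
&=\int_{t_k}^{t_{k+1}}\!\!f'(\xi_s)\bigl(z(s)-z(t_k)\bigr)\,ds\\
&\quad+\int_{t_k}^{t_{k+1}}\!\!f'(\xi_s)\!\int_{t_k}^{s}\!S(s-r)\,dB(r)\,ds\\
&\quad+\int_{t_k}^{t_{k+1}}\!\!f'(\xi_s)\!\int_0^{t_k}\!\![S(s-r)-S(t_k-r)]\,dB(r)\,ds,
\end{align*}
with $\xi_s=(1-\theta_0)u(s)+\theta_0 u(t_k)$. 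The first line is $O(\tau^2)$ by Proposition \ref{prop:3}; the second line is, by independence of Brownian increments past $t_k$ and It\^o isometry, a centred quantity of $L^2$-size $O(\tau^{3/2})$. For the third line I would replace $f'(\xi_s)$ by the finite-difference quotient $\bigl(f(u_k)-f(u_{k-1})\bigr)/(u_k-u_{k-1})$ and carry out the $s$-integration analytically, producing
\begin{equation*}
A^{-\alpha}\!\int_0^{t_k}[S(t_k-r)-S(t_{k+1}-r)]\,dB(r)-\tau\!\int_0^{t_k}S(t_k-r)\,dB(r),
\end{equation*}
which is exactly the stochastic correction encoded in \eqref{eq:4.1}. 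The residual produced by swapping $f'(\xi_s)$ for the difference quotient is controlled by a second Lagrange expansion, the Lipschitz bound on $f'$ from Assumption \ref{as:2.1}, the stability estimate Theorem \ref{th:1}, and the regularity of the Ornstein--Uhlenbeck process recalled in Section \ref{sec:2}.

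Collecting these bounds yields a recursion $\|e_{n+1}\|_{L^2(D,U)}\lesssim \tau\sum_{k\le n}\|e_k\|_{L^2(D,U)}+R_n$, where the consistency term $R_n$ inherits its size from the first summand of the Lagrange expansion. Proposition \ref{prop:4} supplies the bound on $\|A^{\alpha/2}(z(s)-z(t_k))\|_{L^2(D,U)}$ that is activated when the inverse $A^{-\alpha}$ is brought through the estimate; this is the precise mechanism that produces the case split between $\tau^{\gamma/\alpha-\epsilon}$ and $\tau$ in the conclusion. A discrete Gr\"onwall argument in the style of Theorem \ref{th:1} then closes the estimate. The main obstacle will be piece (iii): the raw stochastic integral $\int_0^{t_k}[S(s-r)-S(t_k-r)]\,dB(r)$ is only of order $\tau^{\min\{\gamma/(2\alpha),1/2\}}$ in $L^2$, so any naive bound would reproduce the slow rate we are trying to beat; the gain arises only because $f'$ frozen in $s$ can be pulled outside the $s$-integration, which is exactly what the modified scheme encodes, so that after this integration the term becomes $O(\tau)$ in an appropriate operator norm multiplied by a bounded stochastic integral, delivering the improved orders stated in the theorem.
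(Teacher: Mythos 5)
Your outer architecture is genuinely different from the paper's: you compare the mild solution \eqref{eq:3.3} directly with the recursion \eqref{eq:4.2} and invoke rational-approximation estimates for $(1+\tau A^{\alpha})^{-1}$ versus $S(\tau)$, whereas the paper explicitly declines that route (``using directly Eqs.~\eqref{eq:3.3} and \eqref{eq:4.2} \dots is complicated'') and instead tests the difference of the weak formulations \eqref{eq:4.4-1}--\eqref{eq:4.4-2} against $e_{n+1}$, obtaining the energy identity $\langle e_{n+1}-e_n,e_{n+1}\rangle=\tfrac12[\|e_{n+1}\|^2-\|e_n\|^2]+\tfrac12\|e_{n+1}-e_n\|^2$ and a five-term splitting $J_1+\dots+J_5$. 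Your route trades the paper's $J_1$ (where Proposition \ref{prop:4} and the sign of $-\tau\|A^{\alpha/2}e_{n+1}\|^2$ are used) for nonsmooth-data smoothing estimates of the backward-Euler rational function; that is workable, but note that Proposition \ref{prop:4} is then \emph{not} ``the precise mechanism that produces the case split'' in your framework --- that role is played entirely by the two stochastic consistency terms below. The core treatment of the nonlinearity (Lagrange expansion, exact $s$-integration of the history integral matching the scheme's correction) is the same as the paper's.

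There are two concrete soft spots. First, the term $\int_{t_k}^{t_{k+1}}f'(\xi_s)\int_{t_k}^{s}S(s-r)\,\mathrm{d}B(r)\,\mathrm{d}s$ is \emph{not} centred and \emph{not} independent of the past as written: $\xi_s=(1-\theta_0)u(s)+\theta_0u(t_k)$ depends on $u(s)$ and hence on the increments of $B$ over $(t_k,s]$. You must first replace $f'(\xi_s)$ by the $\mathcal{F}_{t_k}$-measurable $f'(u(t_k))$ (the paper's $J_{32}$) and control the remainder $f'(\xi_s)-f'(u(t_k))$ separately ($J_{31}$), where the gain comes from multiplying two small factors. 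Moreover the single-step $L^2$ size of the adapted part is $\tau^{1+\min\{\gamma/(2\alpha),1/2\}}$, not $\tau^{3/2}$; for space--time white noise one always has $\gamma<\alpha$, and the stated rate survives only through martingale orthogonality across steps (equivalently, the paper's observation that the pairing with the adapted part of $e_{n+1}$ in \eqref{eq:4.3} vanishes). Second, for the residual $\bigl[f'(\xi_s)-\frac{f(u_k)-f(u_{k-1})}{u_k-u_{k-1}}\bigr]\int_0^{t_k}[S(s-r)-S(t_k-r)]\,\mathrm{d}B(r)$ you list the right ingredients but omit the essential structure: \emph{both} factors are $O(\tau^{\min\{\gamma/(2\alpha),1/2\}})$ and it is their product that delivers $\tau^{\min\{\gamma/\alpha,1\}}$; since the two factors are correlated, estimating the expectation of the product requires Cauchy--Schwarz and hence fourth moments of each (this is why $\|u_0\|_{L^4(D,\dot U^{\gamma})}$ and the $L^4$ bound on the Ornstein--Uhlenbeck increment appear in the paper's $J_4$ and $J_5$). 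Without these two steps the argument reverts to the naive rate $\tau^{\min\{\gamma/(2\alpha),1/2\}}$ you are trying to beat; with them, your decomposition closes and reproduces the theorem.
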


\begin{proof}
Using directly Eqs. \eqref{eq:3.3} and \eqref{eq:4.2} to prove Theorem \ref{th:2} is complicated, thus we consider using Eqs. \eqref{eq:3.2} and \eqref{eq:4.1} to derive the above estimates. Let $\xi\in U$, then the weak formulations of Eqs. \eqref{eq:3.2} and \eqref{eq:4.1} are as follows
\begin{equation}\label{eq:4.4-1}
 \left\langle z(t_{n+1})-z(t_{n}),\xi\right\rangle+ \left\langle\int^{t_{n+1}}_{t_{n}}A^\alpha  z(t)\mathrm{d}t,\xi\right\rangle= \left\langle\int^{t_{n+1}}_{t_{n}}f\left(u(t)\right)\mathrm{d}t,\xi\right\rangle
\end{equation}
and
\begin{equation}\label{eq:4.4-2}
\begin{split}
 &\left\langle z_{n+1}-z_{n},\xi\right\rangle+\left\langle\tau A^{\alpha}z_{n+1},\xi\right\rangle\\
 &=\left\langle\tau f\left(u_{n}\right),\xi\right\rangle-\left\langle\tau \frac{f\left(u_{n}\right)-f\left(u_{n-1}\right)}{u_{n}-u_{n-1}}\int^{t_n}_0S\left(t_n-r\right)\mathrm{d}B(r),\xi\right\rangle\\
&~~~~+\left\langle\frac{f\left(u_{n}\right)-f\left(u_{n-1}\right)}{u_{n}-u_{n-1}}\int^{t_n}_0A^{-\alpha}\left[S\left(t_n-r\right)-S\left(t_{n+1}-r\right)\right]\mathrm{d}B(r),\xi\right\rangle.
\end{split}
\end{equation}
Let $\xi=e_{n+1}$
and
\begin{equation*}
\Psi_2 u(s)=(1-\theta_1)u(s)+\theta_1 u(t_k),\quad t_k<s<t_{k+1},\quad 0<\theta_1<1.
\end{equation*}
Using Lagrange mean value theorem, Eqs. \eqref{eq:4.4-1} and \eqref{eq:4.4-2} leads to
\begin{equation}\label{eq:4.4}
\begin{split}
&\left\langle e_{n+1}-e_{n},e_{n+1}\right\rangle\\
&=\left\langle-\int^{t_{n+1}}_{t_{n}}A^{\alpha}\left[z(s)-z_{n+1}\right]\mathrm{d}s+\int^{t_{n+1}}_{t_{n}}\left[f\left(u(s)\right)-f\left(u_n\right)\right]\mathrm{d}s,e_{n+1}\right\rangle\\
&~~~~-\left\langle f'\left(\Psi_1 u_{n}\right)\int^{t_n}_0A^{-\alpha}\left[S\left(t_n-r\right)-S\left(t_{n+1}-r\right)\right]\mathrm{d}B(r),e_{n+1}\right\rangle\\
&~~~~+\left\langle\tau f'\left(\Psi_1 u_{n}\right)\int^{t_n}_0S\left(t_n-r\right)\mathrm{d}B(r),e_{n+1}\right\rangle\\
&=\left\langle-\int^{t_{n+1}}_{t_{n}}A^{\alpha}\left[z(s)-z_{n+1}\right]\mathrm{d}s,e_{n+1}\right\rangle\\
&~~~~+\left\langle\int^{t_{n+1}}_{t_{n}}\left[f'\left(\Psi_2 u(s)\right)\left[u(s)-u(t_n)\right]+f\left(u(t_n)\right)-f(u_n)\right]\mathrm{d}s,e_{n+1}\right\rangle\\
&~~~~-\left\langle\int^{t_{n+1}}_{t_{n}}f'\left(\Psi_1 u_{n}\right)\int^{t_n}_0\left[S\left(s-r\right)-S\left(t_n-r\right)\right]\mathrm{d}B(r)\mathrm{d}s,e_{n+1}\right\rangle\\
\end{split}.
\end{equation}
Substituting Eq. \eqref{eq:3.1-2} into Eq. \eqref{eq:4.4}, we have
\begin{equation*}
\begin{split}
&\left\langle e_{n+1}-e_{n},e_{n+1}\right\rangle\\
&=\left\langle-\int^{t_{n+1}}_{t_{n}}A^{\alpha}\left[z(s)-z_{n+1}\right]\mathrm{d}s,e_{n+1}\right\rangle\\
&~~~~+\left\langle\int^{t_{n+1}}_{t_{n}}\left[f'\left(\Psi_2 u(s)\right)\left[z(s)-z(t_n)\right]+f\left(u(t_n)\right)-f(u_n)\right]\mathrm{d}s,e_{n+1}\right\rangle\\
&~~~~-\left\langle\int^{t_{n+1}}_{t_{n}}\left[f'\left(\Psi_1 u_{n}\right)-f'\left(\Psi_2 u(s)\right)\right]\int^{t_n}_0\left[S\left(s-r\right)-S\left(t_n-r\right)\right]\mathrm{d}B(r)\mathrm{d}s,e_{n+1}\right\rangle\\
&~~~~+\left\langle\int^{t_{n+1}}_{t_{n}}f'\left(\Psi_2 u(s)\right)\int^s_{t_{n}}S(s-r)\mathrm{d}B(r)\mathrm{d}s,e_{n+1}\right\rangle.
\end{split}.
\end{equation*}
Thanks to
\begin{equation*}
\begin{split}
\left\langle e_{n+1}-e_{n},e_{n+1}\right\rangle=\frac{1}{2}\left[\left\|e_{n+1}\right\|^2-\left\|e_{n}\right\|^2\right]+\frac{1}{2}\left\|e_{n+1}-e_{n}\right\|^2,
\end{split}
\end{equation*}
then
\begin{equation}\label{eq:4.6}
\begin{split} &\frac{1}{2}\mathrm{E}\left[\left\|e_{n+1}\right\|^2-\left\|e_{n}\right\|^2\right]+\frac{1}{2}\mathrm{E}\left[\left\|e_{n+1}-e_{n}\right\|^2\right]\\
&=\mathrm{E}\left[\left\langle-\int^{t_{n+1}}_{t_{n}}A^{\alpha}\left[z(s)-z_{n+1}\right]\mathrm{d}s,e_{n+1}\right\rangle\right]\\
&~~~~+\mathrm{E}\left[\left\langle\int^{t_{n+1}}_{t_{n}}\left(f'\left(\Psi_2 u(s)\right)\left[z(s)-z(t_n)\right]+f\left(u(t_n)\right)-f\left(u_n\right)\right)\mathrm{d}s,e_{n+1}\right\rangle\right]\\
&~~~~+\mathrm{E}\left[\left\langle\int^{t_{n+1}}_{t_{n}}f'\left(\Psi_2 u(s)\right)\int^s_{t_{n}}S(s-r)\mathrm{d}B(r)\mathrm{d}s,e_{n+1}\right\rangle\right]\\
&~~~~+\mathrm{E}\left[\left\langle\int^{t_{n+1}}_{t_{n}}\left[f'\left(\Psi_2 u(s)\right)-f'\left(u(t_n)\right)\right]\int^{t_n}_0\left[S\left(s-r\right)-S\left(t_n-r\right)\right]\mathrm{d}B(r)\mathrm{d}s,e_{n+1}\right\rangle\right]\\
&~~~~+\mathrm{E}\left[\left\langle\int^{t_{n+1}}_{t_{n}}\left[f'\left(u(t_n)\right)-f'\left(\Psi_1 u_n\right)\right]\int^{t_n}_0\left[S\left(s-r\right)-S\left(t_n-r\right)\right]\mathrm{d}B(r)\mathrm{d}s,e_{n+1}\right\rangle\right]\\
&=J_1+J_2+J_3+J_4+J_5.
\end{split}
\end{equation}
As $\gamma>0$, the Proposition \ref{prop:3} shows that the H\"older regularity of $z(t)$ is independent of $\gamma$ in time. Thus, the estimate of $J_2$ is discussed only in one case.
\begin{equation*}
\begin{split}
J_2\lesssim& \int^{t_{n+1}}_{t_{n}}\mathrm{E}\left[\left\|f'\left(\Psi_2 u(s)\right)\left[z(s)-z(t_n)\right]\right\|\left\|e_{n+1}\right\|\right]\mathrm{d}s\\
&+\int^{t_{n+1}}_{t_{n}}\mathrm{E}\left[\left\|f\left(u(t_n)\right)-f\left(u_n\right)\right\|\left\|e_{n+1}\right\|\right]\mathrm{d}s\\
\lesssim& \int^{t_{n+1}}_{t_{n}}\mathrm{E}\left[\left\|f'\left(\Psi_2 u(s)\right)\left[z(s)-z(t_n)\right]\right\|\left\|e_{n+1}\right\|\right]\mathrm{d}s+\int^{t_{n+1}}_{t_{n}}\mathrm{E}\left[\left\|e_n\right\|\left\|e_{n+1}\right\|\right]\mathrm{d}s\\
\lesssim& \int^{t_{n+1}}_{t_{n}}\mathrm{E}\left[\left\|\left[z(s)-z(t_n)\right]\right\|^2+\left\|e_{n+1}\right\|^2\right]\mathrm{d}s+\int^{t_{n+1}}_{t_{n}}\mathrm{E}\left[\left\|e_n\right\|^2+\left\|e_{n+1}\right\|^2\right]\mathrm{d}s\\
\lesssim& \tau^3\left\|u_0\right\|^2_{L^2\left(D,\dot{U}^{\max\left\{2\alpha,\gamma\right\}}\right)}+ \tau\mathrm{E}\left[\left\|e_{n}\right\|^2+\left\|e_{n+1}\right\|^2\right].
\end{split}
\end{equation*}
As $0<\gamma\le\alpha$, using the Proposition \ref{prop:4}, H\"older inequality and Young¡¯s inequality leads to the estimate of $J_1$.
\begin{equation*}
\begin{split}
J_1=&\mathrm{E}\left[\left\langle-\int^{t_{n+1}}_{t_{n}}A^{\frac{\alpha}{2}}\left[z(s)-z\left(t_{n+1}\right)+z\left(t_{n+1}\right)-z_{n+1}\right]\mathrm{d}s,A^{\frac{\alpha}{2}}e_{n+1}\right\rangle\right]\\
\lesssim& \int^{t_{n+1}}_{t_{n}}\mathrm{E}\left[\left\|A^{\frac{\alpha}{2}}\left[z(s)-z\left(t_{n+1}\right)\right]\right\|\left\|A^{\frac{\alpha}{2}}e_{n+1}\right\|\right]\mathrm{d}s-\tau\mathrm{E}\left[\left\|A^{\frac{\alpha}{2}}e_{n+1}\right\|^2\right]\\
\lesssim& \int^{t_{n+1}}_{t_{n}}\mathrm{E}\left[\left\|A^{\frac{\alpha}{2}}\left[z(s)-z\left(t_{n+1}\right)\right]\right\|^2+\left\|A^{\frac{\alpha}{2}}e_{n+1}\right\|^2\right]\mathrm{d}s-\tau\mathrm{E}\left[\left\|A^{\frac{\alpha}{2}}e_{n+1}\right\|^2\right]\\
\lesssim&\tau^{2+\frac{\gamma}{\alpha}-2\epsilon}\left(\frac{1}{\epsilon^3}+\frac{1}{\epsilon^2}\left\|u_0\right\|^2_{L^2\left(D,\dot{U}^{\max\left\{3\alpha,\gamma\right\}}\right)}\right).
\end{split}
\end{equation*}
To make use of independent increments of Brownian motion, $J_3$ is decomposed into two parts.
\begin{equation*}
\begin{split}
J_3=&\mathrm{E}\left[\left\langle\int^{t_{n+1}}_{t_{n}}\left(f'\left(\Psi_2 u(s)\right)-f'\left(u(t_n)\right)+f'\left(u(t_n)\right)\right)\int^s_{t_{n}}S(s-r)\mathrm{d}B(r)\mathrm{d}s,e_{n+1}\right\rangle\right]\\
\lesssim&\mathrm{E}\left[\int^{t_{n+1}}_{t_{n}}\left\|\left(f'\left(\Psi_2 u(s)\right)-f'\left(u(t_n)\right)\right)\int^s_{t_{n}}S(s-r)\mathrm{d}B(r)\right\|\left\|e_{n+1}\right\|\mathrm{d}s\right]\\
&+\mathrm{E}\left[\left\langle\int^{t_{n+1}}_{t_{n}}f'\left(u(t_n)\right)\int^s_{t_{n}}S(s-r)\mathrm{d}B(r)\mathrm{d}s,e_{n+1}\right\rangle\right]\\
=&J_{31}+J_{32}
\end{split}.
\end{equation*}
Combining Assumptions \ref{as:2.1}, Lemma \ref{lem:2}, Young¡¯s inequality and Burkh\"older-Davis-Gundy inequality leads to
\begin{equation*}
\begin{split}
J_{31}\lesssim&\mathrm{E}\left[\int^{t_{n+1}}_{t_{n}}\left\|\left(f'\left(\Psi_2 u(s)\right)-f'\left(u(t_n)\right)\right)\int^s_{t_{n}}S(s-r)\mathrm{d}B(r)\right\|^2\mathrm{d}s\right]+\tau\mathrm{E}\left[\left\|e_{n+1}\right\|^2\right]\\
\lesssim&\int^{t_{n+1}}_{t_{n}}\mathrm{E}\left[\left\|\left(f'\left(\Psi_2 u(s)\right)-f'\left(u(t_n)\right)\right)\right\|^4\right]\mathrm{d}s\\
&+\int^{t_{n+1}}_{t_{n}}\mathrm{E}\left[\left\|\int^s_{t_{n}}S(s-r)\mathrm{d}B(r)\right\|^4\right]\mathrm{d}s+\tau\mathrm{E}\left[\left\|e_{n+1}\right\|^2\right]\\
\lesssim&\tau^{1+\frac{2\gamma}{\alpha}}\left(\frac{1}{\epsilon}+\left\|u_0\right\|^4_{L^4\left(D,\dot{U}^\gamma\right)}\right)+\int^{t_{n+1}}_{t_{n}}\left(\sum^\infty_{i=1}\lambda^{-2\rho}_i\int^s_{t_{n}}\mathrm{e}^{-2\lambda^{\alpha}_i(s-r)}\mathrm{d}r\right)^2\mathrm{d}s+\tau\mathrm{E}\left[\left\|e_{n+1}\right\|^2\right]\\
\lesssim&\tau^{1+\frac{2\gamma}{\alpha}}\left(\frac{1}{\epsilon}+\left\|u_0\right\|^4_{L^4\left(D,\dot{U}^\gamma\right)}\right)+\tau^{1+\frac{2\gamma}{\alpha}}\left[\sum^\infty_{i=1}\lambda_i^{\gamma-\alpha-2\rho}\right]^2+\tau\mathrm{E}\left[\left\|e_{n+1}\right\|^2\right].
\end{split}.
\end{equation*}
To obtain the error estimate of $J_{32}$, we need the following auxiliary Eq. \eqref{eq:4.3}. Combining Eqs. \eqref{eq:3.3} and \eqref{eq:4.2}, we have
\begin{equation}\label{eq:4.3}
\begin{split}
e_{n+1}=&S\left(t_{n+1}\right)z_{0}-\frac{z_{0}}{\left(1+\tau A^{\alpha}\right)^{n+1}}\\
&+\sum^{n-1}_{i=0}\int^{t_{i+1}}_{t_{i}}\left(S\left(t_{n+1}-s\right)f\left(u(s)\right)-\frac{f\left(u_{i}\right)}{\left(1+\tau A^{\alpha}\right)^{n-i+1}}\right)\mathrm{d}s\\
&-\tau\sum^n_{k=0}\frac{f\left(u_{k}\right)-f\left(u_{k-1}\right)}{\left(1+\tau A^{\alpha}\right)^{n-k+1}\left(u_{k}-u_{k-1}\right)}\int^{t_k}_0S\left(t_k-r\right)\mathrm{d}B(r)\\
&+\sum^n_{k=0}\frac{f\left(u_{k}\right)-f\left(u_{k-1}\right)}{\left(1+\tau A^{\alpha}\right)^{n-k+1}\left(u_{k}-u_{k-1}\right)}\int^{t_k}_0A^{-\alpha}\left[S\left(t_k-r\right)-S\left(t_{k+1}-r\right)\right]\mathrm{d}B(r)\\
&+\int^{t_{n+1}}_{t_{n}}\left(S\left(t_{n+1}-s\right)f\left(u(t_n)\right)-\frac{f\left(u_{n}\right)}{\left(1+\tau A^{\alpha}\right)}\right)\mathrm{d}s\\
&+\int^{t_{n+1}}_{t_{n}}S\left(t_{n+1}-s\right)\left[f\left(u(s)\right)-f\left(u(t_n)\right)\right]\mathrm{d}s.
\end{split}
\end{equation}
Substituting \eqref{eq:4.3} into $J_{32}$, then independent increment of $B(t)$ implies
\begin{equation*}
\begin{split}
J_{32}=&\mathrm{E}\left[\left\langle\int^{t_{n+1}}_{t_{n}}f'\left(u(t_n)\right)\int^s_{t_{n}}S(s-r)\mathrm{d}B(r)\mathrm{d}s,\int^{t_{n+1}}_{t_{n}}S\left(t_{n+1}-s\right)\left[f\left(u(s)\right)-f\left(u(t_n)\right)\right]\mathrm{d}s\right\rangle\right]\\
\lesssim&\mathrm{E}\left[\int^{t_{n+1}}_{t_{n}}\left\|f'\left(u(t_n)\right)\int^s_{t_{n}}S(s-r)\mathrm{d}B(r)\right\|\mathrm{d}s\int^{t_{n+1}}_{t_{n}}\left\|f\left(u(s)\right)-f\left(u(t_n)\right)\right\|\mathrm{d}t\right]\\
\lesssim& \int^{t_{n+1}}_{t_{n}}\int^{t_{n+1}}_{t_{n}}\mathrm{E}\left[\left\|\int^s_{t_{n}}S(s-r)\mathrm{d}B(r)\right\|^2+\left\|u(t)-u(t_n)\right\|^2\right]\mathrm{d}t\mathrm{d}s\\
\lesssim& \tau^{2+\frac{\gamma}{\alpha}}\sum^\infty_{i=1}\lambda_i^{\gamma-\alpha-2\rho}+\tau^{2+\frac{\gamma}{\alpha}}\left(\frac{1}{\epsilon}+\left\|u_0\right\|^2_{L^2\left(D,\dot{U}^\gamma\right)}\right).
\end{split}
\end{equation*}
Combining $J_{31}$ and $J_{32}$, we obtain the estimate of $J_3$.
\begin{equation*}
\begin{split}
J_3\lesssim&\tau^{1+\frac{2\gamma}{\alpha}}\left(\frac{1}{\epsilon}+\left\|u_0\right\|^4_{L^4\left(D,\dot{U}^\gamma\right)}\right)+\tau^{1+\frac{2\gamma}{\alpha}}\left[\sum^\infty_{i=1}\lambda_i^{\gamma-\alpha-2\rho}\right]^2+\tau\mathrm{E}\left[\left\|e_{n+1}\right\|^2\right].\end{split}
\end{equation*}
Again, using Burkh\"older-Davis-Gundy inequality and H\"older inequality, we have
\begin{equation*}
\begin{split}
J_4\lesssim&\mathrm{E}\left[\int^{t_{n+1}}_{t_{n}}\left\|\left[f'\left(\Psi_2 u(s)\right)-f'\left(u(t_n)\right)\right]\int^{t_n}_0\left[S\left(s-r\right)-S\left(t_n-r\right)\right]\mathrm{d}B(r)\right\|\left\|e_{n+1}\right\|\right]\mathrm{d}s\\
\lesssim&\int^{t_{n+1}}_{t_{n}}\mathrm{E}\left[\left\|\left[f'\left(\Psi_2 u(s)\right)-f'\left(u(t_n)\right)\right]\int^{t_n}_0\left[S\left(s-r\right)-S\left(t_n-r\right)\right]\mathrm{d}B(r)\right\|^2\right]\mathrm{d}s\\
&+\tau\mathrm{E}\left[\left\|e_{n+1}\right\|^2\right]\\
\lesssim&\int^{t_{n+1}}_{t_{n}}\mathrm{E}\left[\left\|\left[f'\left(u(t_n)\right)-f'\left(\Psi_2 u(s)\right)\right]\right\|^4\right]\mathrm{d}s\\
&+\int^{t_{n+1}}_{t_{n}}\mathrm{E}\left[\left\|\int^{t_n}_0\left[S\left(s-r\right)-S\left(t_n-r\right)\right]\mathrm{d}B(r)\right\|^4\right]\mathrm{d}s+\tau\mathrm{E}\left[\left\|e_{n+1}\right\|^2\right]\\
\lesssim&\int^{t_{n+1}}_{t_{n}}\mathrm{E}\left[\left\|u(t_n)-u(s)\right\|^4\right]\mathrm{d}s\\
&+\int^{t_{n+1}}_{t_{n}}\left[\int^{t_n}_0\sum^\infty_{i=1}\lambda_i^{-2\rho}\left|\mathrm{e}^{-\lambda_i^\alpha(t_n-r)}\left(1-\mathrm{e}^{-\lambda_i^\alpha(s-t_n)}\right)\right|^2\mathrm{d}r\right]^2\mathrm{d}s+\tau\mathrm{E}\left[\left\|e_{n+1}\right\|^2\right]\\
\lesssim&\tau\mathrm{E}\left[\left\|e_{n+1}\right\|^2\right]+\tau^{1+\frac{2\gamma}{\alpha}}\left(\frac{1}{\epsilon}+\left\|u_0\right\|^4_{L^4\left(D,\dot{U}^\gamma\right)}\right)+\tau^{1+\frac{2\gamma}{\alpha}}\left[\sum^\infty_{i=1}\lambda_i^{\gamma-\alpha-2\rho}\right]^2
\end{split}
\end{equation*}
and
\begin{equation*}
\begin{split}
J_5\lesssim&\mathrm{E}\left[\int^{t_{n+1}}_{t_{n}}\left\|\left[f'\left(u(t_n)\right)-f'\left(\Psi_1 u_n\right)\right]\int^{t_n}_0\left[S\left(s-r\right)-S\left(t_n-r\right)\right]\mathrm{d}B(r)\right\|\mathrm{d}s\left\|e_{n+1}\right\|\right]\\
\lesssim&\int^{t_{n+1}}_{t_{n}}\mathrm{E}\left[\left\|f'\left(\Psi_1 u_n\right)-f'\left(\Psi_1 u(t_n)\right)+f'\left(\Psi_1 u(t_n)\right)-f'\left(u(t_n)\right)\right\|^4\right]\mathrm{d}s\\
&+\int^{t_{n+1}}_{t_{n}}\mathrm{E}\left[\left\|\int^{t_n}_0\left[S\left(s-r\right)-S\left(t_n-r\right)\right]\mathrm{d}B(r)\right\|^4\right]\mathrm{d}s+\tau\mathrm{E}\left[\left\|e_{n+1}\right\|^2\right]\\
\lesssim&\tau\mathrm{E}\left[\left\|e_{n+1}\right\|^2\right]+\tau^{1+\frac{2\gamma}{\alpha}}\left(\frac{1}{\epsilon}+\left\|u_0\right\|^4_{L^4\left(D,\dot{U}^\gamma\right)}\right)+\tau^{1+\frac{2\gamma}{\alpha}}\left[\sum^\infty_{i=1}\lambda_i^{\gamma-\alpha-2\rho}\right]^2\\
\end{split}
\end{equation*}
Then, based on the above estimates, we have
\begin{equation*}
\begin{split} &\frac{1}{2}\mathrm{E}\left[\left\|e_{n+1}\right\|^2-\left\|e_{n}\right\|^2\right]+\frac{1}{2}\mathrm{E}\left[\left\|e_{n+1}-e_{n}\right\|^2\right]\\
&\lesssim\tau^{1+\frac{2\gamma}{\alpha}-2\epsilon}\left(\frac{1}{\epsilon^3}+\frac{1}{\epsilon^2}\left\|u_0\right\|^4_{L^4\left(D,\dot{U}^{\max\left\{3\alpha,\gamma\right\}}\right)}\right)+\tau^{1+\frac{2\gamma}{\alpha}}\left[\sum^\infty_{i=1}\lambda_i^{\gamma-\alpha-2\rho}\right]^2+\tau\mathrm{E}\left[\left\|e_{n+1}\right\|^2\right],
\end{split}
\end{equation*}
which implies
\begin{equation*}
\begin{split}
&\mathrm{E}\left[\left\|e_{n+1}\right\|^2\right]\\
&\lesssim\tau^{\frac{2\gamma}{\alpha}-2\epsilon}\left(\frac{1}{\epsilon^3}+\frac{1}{\epsilon^2}\left\|u_0\right\|^4_{L^4\left(D,\dot{U}^{\max\left\{3\alpha,\gamma\right\}}\right)}\right)+\tau^{\frac{2\gamma}{\alpha}}\left[\sum^\infty_{i=1}\lambda_i^{\gamma-\alpha-2\rho}\right]^2+\tau\sum^{n}_{i=1}\mathrm{E}\left[\left\|e_{i+1}\right\|^2\right].
\end{split}
\end{equation*}
Using the discrete Gr\"onwall inequality leads to
\begin{equation*}
\begin{split}
\mathrm{E}\left[\left\|e_{n+1}\right\|^2\right]\lesssim\tau^{\frac{2\gamma}{\alpha}-2\epsilon}\left(\frac{1}{\epsilon^3}+\frac{1}{\epsilon^2}\left\|u_0\right\|^4_{L^4\left(D,\dot{U}^{\max\left\{3\alpha,\gamma\right\}}\right)}\right).
\end{split}
\end{equation*}

As $\gamma>\alpha$, by using similar steps, we can get the following estimates

\begin{equation*}
\begin{split} &\frac{1}{2}\mathrm{E}\left[\left\|e_{n+1}\right\|^2-\left\|e_{n}\right\|^2\right]+\frac{1}{2}\mathrm{E}\left[\left\|e_{n+1}-e_{n}\right\|^2\right]\\
&\lesssim\tau^{3}\left(1+\left\|u_0\right\|^4_{L^4\left(D,\dot{U}^{\max\left\{3\alpha,\gamma\right\}}\right)}\right)+\tau^{3}\left[\sum^\infty_{i=1}\lambda_i^{-2\rho}\right]^2+\tau\mathrm{E}\left[\left\|e_{n+1}\right\|^2\right].
\end{split}
\end{equation*}
Then
\begin{equation*}
\begin{split} \mathrm{E}\left[\left\|e_{n+1}\right\|^2\right]\lesssim\tau^{2}\left(1+\left\|u_0\right\|^4_{L^4\left(D,\dot{U}^{\max\left\{3\alpha,\gamma\right\}}\right)}\right).
\end{split}
\end{equation*}
This completes the proof of Theorem \ref{th:2}.
\end{proof}

\section{Numerical experiments} \label{sec:5}
In this section, we use the proposed scheme to two numerical examples for solving the Eq. \eqref{eq:1.1}. The goal is to verify the theoretical results and investigate the effect of the parameter $\alpha$ on the convergence. All numerical errors are given in the sense of mean-squared $L^2$-norm.

Based on postprocessing the stochastic integral, a modified spectral Galerkin approximation is used in the spatial direction \cite{LiuX2021}.
We solve \eqref{eq:1.1} in the one-dimensional domain $D=(0,1)$ by the proposed method with Dirichlet eigenpairs $\lambda_{j}=\pi^2j^2$, $\phi_{j}=\sqrt{2}\sin(j\pi x)$ and $j=1,2,\dots, M$. The numerical results with a smooth initial data $u(x,0)=\sin(2\pi x)$ and nonlinear term $f(u(x,t))=\sin(u(x,t))$ are presented in numerical experiments. Let $u^M_{n}$ denotes the approximation with fixed time step size $\tau=\frac{T}{N}$ at time $t=n \tau$. Using the following formula calculates the convergence rates in time:
\begin{eqnarray*}
\textrm{convergence rate}=
 \frac{\ln\left(\left\|u^M_{2N}-u^M_{N}\right\|_{L^2(D,U)}/
\left\|u^M_{N}-u^M_{N/2}\right\|_{L^2(D,U)}\right)}{\ln2}.
\end{eqnarray*}

In the numerical simulations, combining the trajectory of $u^M_{N}$ and the following equation get the approximation of $\left\|u^M_{2N}-u^M_{N}\right\|_{L^2(D,U)}$.
\begin{equation*}
\left\|u^M_{2N}-u^M_{N}\right\|_{L^2(D,U)}\approx \left(\frac{1}{K}\sum^K_{k=1}\left\|u^M_{2N,k}-u^{M}_{N,k}\right\|^2\right)^{\frac{1}{2}},
\end{equation*}
where $K=1000$, and $k$ represents the $k$-th trajectory.

For fixing $\rho=0.2$, as $\alpha = 0.4, 0.6$, and $0.8$, Theorem \ref{th:2} shows that the theoretical convergence rates are approximately $\frac{3}{4}$, $\frac{5}{6}$, and $\frac{7}{8}$, respectively. The convergence rates of the scheme \eqref{eq:4.1} are tested with $M =500$, which ensures the temporal error is the dominant one. From Table \ref{table:1}, one can see that the convergence rates of the proposed scheme are at least $\frac{\gamma}{\alpha}$ and increase with the increase of $\alpha$. The numerical results confirm the error estimate in Theorem \ref{th:2}.
\begin{table}[H]
\renewcommand\arraystretch{1.6}
\caption{Time convergence rates with $M=500$, $T=0.2$ and $\rho=0.2$.}\label{table:1}
\centering
\begin{tabular}{c c c c c c c c }
\hline
$N$ & $\alpha=0.4$ & Rate &$\alpha=0.6$ &Rate & $\alpha=0.8$ &Rate  \\
\hline
  2&0.0252&     & 0.0422&     &0.0414&          \\
  4&0.0145& 0.797& 0.0241& 0.808&0.0205&1.014\\
  8&0.0083& 0.814& 0.0136& 0.825&0.0100&1.036  \\
  \hline
\end{tabular}
\end{table}

Next, we observe the behavior of the convergence
for $\gamma>\alpha$. Choosing sufficiently big $\rho=1.2$ and $M=100$ guarantee that the dominant errors arise from the temporal approximation. Figures \ref{fig:fig1} shows that the temporal convergence rates have an order of 1 by using the proposed scheme.
\begin{figure}[htbp]
  \centering
  \includegraphics[scale=0.6]{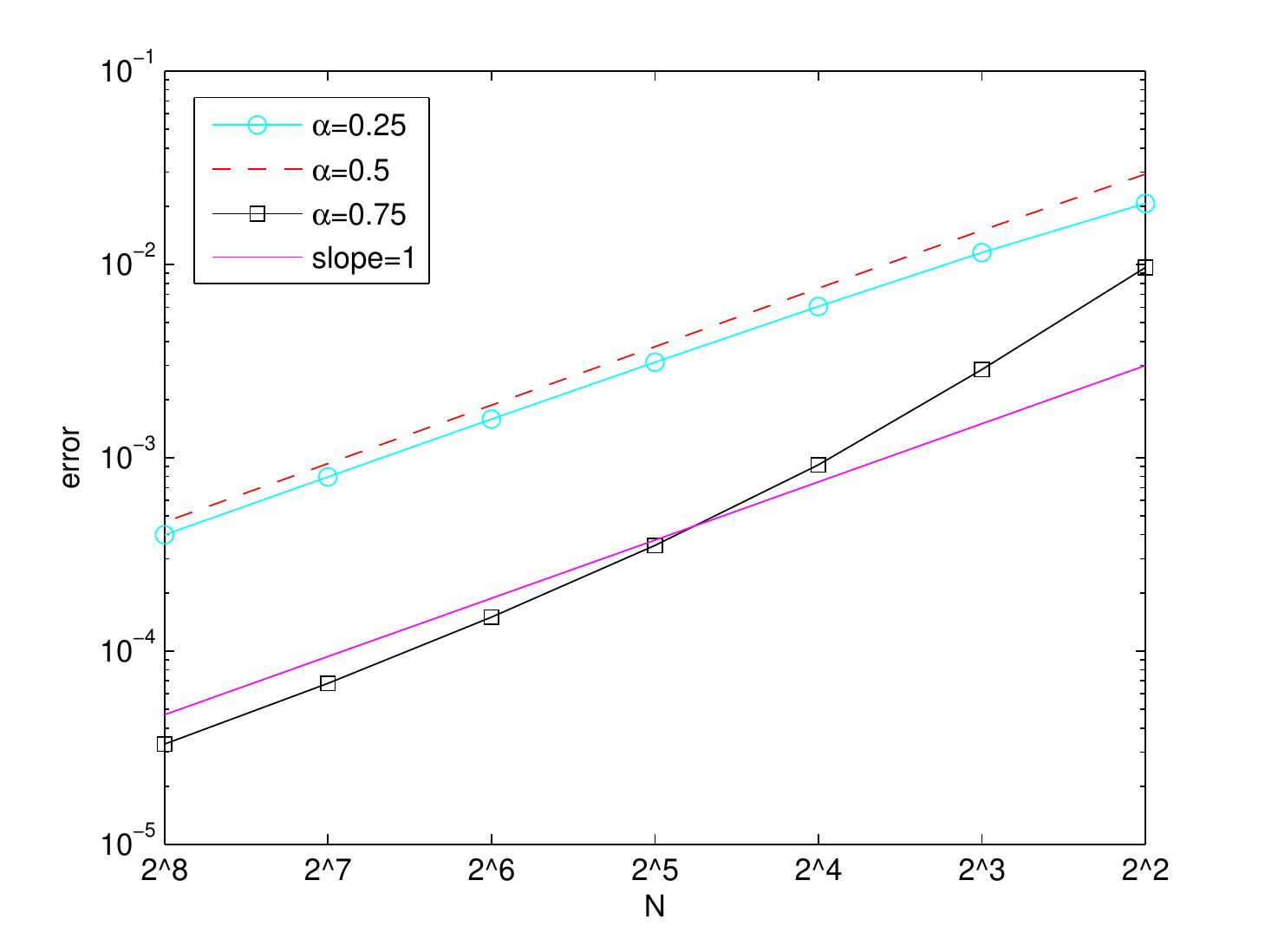}
  \caption{Temporal error convergence of scheme \eqref{eq:4.1} with $\rho=1.2$, $T=0.5$ and $M=100$.}
  \label{fig:fig1}
\end{figure}

\section{Conclusion} \label{sec:6}
In this paper, the temporal discrete scheme for the fractional stochastic PDEs is discussed. We attempt to improve the temporal convergence rate of the semi-implicit Euler scheme. The main challenge come from the H\"older regularity of $u(t)$ in time. Thus, by transforming  Eq. \eqref{eq:1.1} into an equivalent form
Eq. \eqref{eq:3.2}, whose H\"older regularity of mild solution is improved. Then, using the semi-implicit Euler scheme discretize Eq. \eqref{eq:3.2}; and a higher accuracy discretization of nonlinear term $f$ is educed by using Lagrange mean value theorem and independent increments of Brownian motion. This scheme can improve the convergence rate in time from ${\min\{\frac{\gamma}{2\alpha},\frac{1}{2}\}}$ to ${\min\{\frac{\gamma}{\alpha},1\}}$.

\section*{Acknowledgements}
The author gratefully thank the anonymous referees for valuable comments and suggestions in improving this paper.


\appendix

\end{document}